% Template for the submittion to:
%   Probability Surveys               [ps]
%   Statistics Surveys                [ss] 
%   Electronic Journal of Statistics  [ejs]
%
% Author: In this template, the places where you need to add information
%         (or delete line) are indicated by {???}.  Mostly the information
%         required is obvious, but some explanations are given in lines starting
% Author:
% All other lines should be ignored.  After editing, there should be
% no instances of ??? after this line.

% use option [preprint] to remove info line at bottom:
% journal options: ps,ss,ejs
\documentclass[ejs]{imsart}

\RequirePackage[OT1]{fontenc}
\RequirePackage{amsthm,amsmath}
\RequirePackage[numbers]{natbib}
\RequirePackage[colorlinks,citecolor=blue,urlcolor=blue]{hyperref}

\usepackage{textcomp}
\usepackage{amsbsy}
\usepackage{amstext}
\usepackage{amssymb}
\usepackage{esint}

\usepackage{amstext}
\usepackage{url}

\usepackage{amssymb}
\usepackage{mathrsfs}
\usepackage[final]{graphicx,epsfig} 
\usepackage{longtable}
\usepackage{indentfirst}
\usepackage[utf8]{inputenc}

\usepackage{hyperref}

\usepackage[math]{easyeqn}
\usepackage[mathscr]{eucal}

\usepackage{verbatim}

\theoremstyle{definition}

\newtheorem*{remark}{\noindent\normalsize\bf{}Remark}
\newtheorem{theorem}{\noindent\normalsize\bf{}Theorem}[section]
\newtheorem{lemma}{\noindent\normalsize\bf{}Lemma}[section]

\def\muhat{\widehat{\mu}}
\def\hiprobt{1-e^{-t}}

\def\Fbb{D^2}
\def\Ftt{D^2_{u }}
\def\Fee{D^2_{v }}

\def\Fte{ D^2_{u v }}
\def\Fet{D^2_{v u}}

\def\rr{\mathtt{r}}

\def\zz{\mathfrak{z}}

\def\rombt{\diamondsuit(\rr,t)}

\def\rb0{r_0^{\flat}}

%{\tilde{\delta}}

\def\ND{\mathcal{N}}

\renewcommand{\Gamma}{\varGamma}
\renewcommand{\Pi}{\varPi}
\renewcommand{\Sigma}{\varSigma}
\renewcommand{\Delta}{\varDelta}
\renewcommand{\Lambda}{\varLambda}
\renewcommand{\Psi}{\varPsi}
\renewcommand{\Phi}{\varPhi}
\renewcommand{\Theta}{\varTheta}
\renewcommand{\Omega}{\varOmega}
\renewcommand{\Xi}{\varXi}
\renewcommand{\Upsilon}{\varUpsilon}

\def\Var{\operatorname{Var}}

\def\argmax{\operatornamewithlimits{argmax}}
\def\argmin{\operatornamewithlimits{argmin}}

\def\tr{\operatorname{tr}}

\def\R{I\!\!R}
\def\E{I\!\!E}
\def\P{I\!\!P}

\def\kappa{\varkappa}

\def\dLb12{T_h^{\flat}(\theta_1^{\flat}, \theta_2^{\flat})}

\def\localr{\Theta(\rr)}

\def\opttheta{\widehat{\theta}}

\def\alphab{\alpha^{\flat}}

\def\alphab12{\alpha^{\flat}(\theta, \theta_0)}
\def\chib12{\chi^{\flat}(\theta, \theta_0)}

\def\Lb0{L^{\flat}(\theta_0)}

\def\L0{L(\theta_0)}

%\definecolor{myhcolor}{rgb}{0.2,0,0.8}
%\definecolor{myhcolor}{named}{red}

\newcommand{\normp}[1]{
\left\Vert #1 \right\Vert
}

\newcommand{\vertiii}[1]{{\left\vert\kern-0.25ex\left\vert\kern-0.25ex\left\vert #1 
    \right\vert\kern-0.25ex\right\vert\kern-0.25ex\right\vert}}

\def\Ind{\operatorname{1}\hspace{-4.3pt}\operatorname{I}}

\def\xx{\mathtt{x}}

\def\etas{\eta^{*}}

\def\thetas{\theta^{*}}

\def\rr{\mathtt{r}}

\def\zz{\mathfrak{z}}

%\def\Crlq{\mathfrak{T}}

%\def\Crlqc{\Crlq_{0}}

%%%%%%%%%%%%   semipar   %%%%%%%%%%%%%%%%%%%%%%%%

%\def\zetavrb{\zetavr_{\rd}}

\def\scorer{\breve{\nabla}}

%\def\scorer_{\thetav}{\zetavr_{\rd}}

%%%%%%%%%%%   BvM   %%%%%%%%%%%%%%%%%%%%%%%%%%%%%%

%%%%%%%%%%%   sms   %%%%%%%%%%%%%%%%%%%%%%%%%%%%%%

%%%%%%%%%%%%   sp   %%%%%%%%%%%%%%%%%%%%%%%%%%%%%

%\def\mm{\kappa}

%=================  density  ==============

%================= LES =====================

%================== qfu/cmr ====================

%\def\Mht{\tilde{\Mh}}

%\def\bias{b}

%%%%%%%%%%%   rough   %%%%%%%%%%%%%

%\def\thetavs{\thetav_{0}}

%\def\vthetavd{\bar{\vthetav}_{\circ}}

\def\etas{\eta^{*}}

%\def\als{\alpha}

%%%%%%%%%%%%%%%%   GAR   %%%%%%%%%%%%%%%

%\def\betac{\beta_{1}}

%\def\fm{s}

%\def\fsp{\fs^{+}}
%\def\fsm{\fs^{-}}

%%%%%%%%%%%%%%%%%%%%%%%%%%%%   bootstrap MS   %%%%%%%%%%%%%%%%%%%%%%%%%%%%

%\def\bw{w^{\sbt}}

%\def\mmmax{\bar{\mm}}

%\def\sbt{\, \sharp}
%\def\sbt{\#}

%\def\sbt{\lozenge}%{\smallsquare}

%\def\Ev{\bb{E}}

%%%%%%%%%%%%%%%%%%%%%%%%%  single-index  %%%%%%%%%%%%%%%%%%%%%%%%%%%%%%

%\def\IFone{F}

%\def\CEDs{\mathsf{M}}

%\def\mm{\mathbbmss{m}}

%\def\ZZ{\mathtt{z}}

%\def\zpm{z^{\pm}}

%\def\dimq{\mathfrak{q}}
%\def\dimq{\mathtt{q}}

%%%%%%%%%%%%%%%%%%%%%%%%%%%%   def   %%%%%%%%%%%%%%%%%%%%%%%%%%%%
%\def\mmmax{\bar{\mm}}
%\def\mmmax{\mathtt{M}}
%\def\sbt{\, \sharp}
%\def\sbt{\#}

%\def\sbt{\lozenge}%{\smallsquare}

%\def\wbr{\epsilon^{\sbt}}

%\def\Ev{\bb{E}}

%%%%%%%%%%%%%%%%%%%%%%%%%%%%   end-def   %%%%%%%%%%%%%%%%%%%%%%%%%%%%

%\def\supA{\mathtt{a}}

%\def\mmax{\mmmax}

%\def\thetavss{\thetav^{*s}}
%\def\thetavds{\thetav^{\circ s}}
%\def\etavss{\etav^{*s}}
%\def\thetetass{\theteta^{*s}}

%\def\xivrs{\xivr^{s}}

%\def\DPrs{\breve{\IFs}}

%\def\scorers{\underline\scorer}

%\def\IF{F}

%\def\AssIds{\AssId^{s}}

\startlocaldefs
\numberwithin{equation}{section}
\theoremstyle{plain}

\endlocaldefs

% will be filled by editor:
\doi{10.1214/154957804100000000}
\pubyear{0000}
\volume{0}
\firstpage{0}
\lastpage{0}
%\arxiv{}

% put your definitions there:
\startlocaldefs
\endlocaldefs

\begin{document}

\begin{frontmatter}

\title{Gaussian approximation for \\ regularized Wasserstein barycenters}
\runtitle{Gaussian approximation for barycenters}

\author{\fnms{Nazar Buzun} \snm{}\ead[label=e1]{n.buzun@skoltech.ru}}
\address{Skolkovo Institute of Science and Technology,
Moscow  \printead{e1}}
\runauthor{N. Buzun}

\begin{abstract}
In this work we consider regularized Wasserstein barycenters (average in Wasserstein distance) in Fourier basis. We prove that random Fourier parameters of the barycenter converge to some Gaussian random vector by distribution. The convergence rate has been derived in finite-sample case with explicit dependence on measures count ($n$) and the dimension of parameters ($p$). 
\end{abstract}

\begin{keyword}[class=MSC]
\kwd{62H10}
\end{keyword}

\begin{keyword}
\kwd{barycenters}
\kwd{Wasserstein distance}
\kwd{Gaussian approximation}
\kwd{multivariate central limit theorem}
\kwd{statistical learning}
\kwd{convex analysis}
\end{keyword}

\tableofcontents

\end{frontmatter}

\section{Introduction}

Monge-Kantorovich distance or Wasserstein distance is a distance between measures. It represents a transportation cost of measure $\mu_1$ into the other measure $\mu_2$.  
\begin{equation}
\label{was_orig_def}
W_p(\mu_1, \mu_2) =  \min_{\pi \in \Pi[\mu_1, \mu_2] } \left(  \int \| x - y \|^p d \pi (x, y) \right)^{1/p} 
\end{equation}
where the condition $\pi \in \Pi[\mu_1, \mu_2]$ means that $\pi(x,y)$ has two marginal distributions:  $\int_y d \pi(x,y) = d\mu_1(x)$ and $\int_x d \pi(x,y) = d \mu_2(y)$. We focus on regularized $W_1$ distance with probabilistic space $\{\R^d, \mathcal{B}(\| \cdot \|_2), L^1\}$ 
\[
\widetilde{W}_1(\mu_1, \mu_2) =  \min_{\pi \in \Pi[\mu_1, \mu_2] }  \int \| x - y \| d \pi (x, y)  + R_{\varepsilon}(\pi) 
\]where $R_{\varepsilon}(\pi)$ is a relatively small addition which improves differential properties of the distance. Namely without $R_{\varepsilon}(\pi)$ we can only bound the first derivative, with it we can bound the second derivative as well.    
There is a notion of mean in Wasserstein distance, called barycenter. And it is the main object in this research. Consider a set of random measures $\{ \mu_i \}_{i=1}^n$. By definition in regularised case the \textit{empirical  and reference barycenters} are  
\[
\muhat = \argmin_{\mu} \sum_{i=1}^n \widetilde{W}_1(\mu, \mu_i)
\]
and
\[
\mu^* = \argmin_{\mu} \sum_{i=1}^n \E \widetilde{W}_1(\mu, \mu_i)
\]
Barycenters are center-of-mass generalization. If we look at the barycenter of a set of uniform measures it extracts the common ``shape'' form of these measures. If the measures are sampled from some distribution then their barycenter can be treated as an empirical approximation of the distribution mean. A simple example is a circles set with means $\{m_i \in \R^2\}$ and radius's $\{r_i\}$.
\begin{figure}
\begin{center}
\includegraphics[scale=0.6]{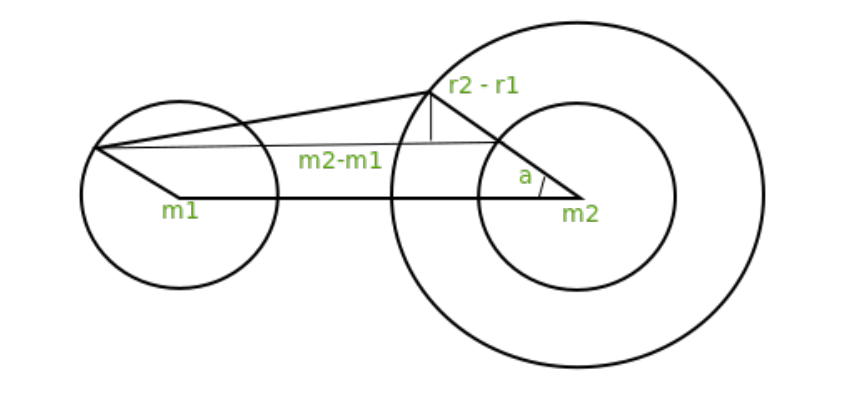}
\end{center}
\caption{Illustration for $W_2$ distance computation between two circles $(m_1, r_1)$ and $(m_2, r_2)$.}
\end{figure}
\[
W_2^2 \big((m_1, r_1), (m_2, r_2) \big) 
\]
\[
= \frac{1}{2\pi} \int_{0}^{2\pi}  [(m_2 - m_1) - (r_2 - r_1) \cos(a)]^2 + [(r_2 - r_1) \sin(a)]^2 da  
\]
\[
= (m_2 - m_1)^2 + (r_2 - r_1) ^2
\]
Their $W_2^2$ barycenter is also a circle with mean $m = \frac{1}{n} \sum_{i=1}^n m_i$ and radius $r = \frac{1}{n} \sum_{i=1}^n r_i$. We  refer to papers \cite{BinWas}, \cite{PenWas}, \cite{bc_coords_proj} for an overview of the barycenters and related study. 

$W_1$ barycenter in the previous example doesn't have an explicit formula but we will show below that it has Gaussian approximation.     
It is well known that the center-of-mass in $l_2$ norm converges to a Gaussian random vector. As for the barycenter ($\muhat$), it is also expected to have some Gaussian properties. For example, if the measures are Gaussian themselves or one-dimensional or circles set then the Gaussian approximation of the $W_2^2$ barycenter is proven in papers \cite{BinWas}, \cite{g_was_clt}. In circles set case the mean and radius converge to some Gaussian variables as a sum of independent observations according to Central Limit Theorem.  In one-dimensional case, denoting distribution functions by $F_i(x)$
\[
W_2^2(\mu_1, \mu_2)  =  \int_0^1 | F_1^{-1}(s) - F_2^{-1}(s) |^2 ds
\]  
one gets
\[
\widehat{F}^{-1}(s) = \frac{1}{n} \sum_{i=1}^n F_i^{-1}(s)
\]
In the case of Gaussian measures with zero mean and variances $\{S_i\}$ 
\[
 W_2^2(\mu_1, \mu_2) =  \tr \{S_1\} + \tr \{S_2 \} - 2 \tr \{ (S_2^{1/2}S_1S_2^{1/2})^{1/2} \}
\] 
and for some non-random matrix $S_*$ (ref. \cite{LimitWasG}) the corresponded barycenter variance is 
\[
\widehat{S} = \frac{1}{n} \sum_{i=1}^n (S_*^{1/2}S_iS_*^{1/2})^{1/2} + O(1/n)
\]
In both last examples one deals with a mean of independent random variables. Being multiplied by $\sqrt{n}$ factor, they converge to a Gaussian variable (or to a Gaussian process in case of $\widehat{F}^{-1}(s)$ by Donsker's Theorem). 
In general case it appears to be very difficult  to reveal such convergence because the barycenter doesn't have an explicit equation and it is an infinite-dimensional object. In order  to handle with this difficulty  we propose an approximation of the $W_1$ barycenter by a sum of independent variables using projection into Fourier basis and involve some novel results from statistical learning theory. The perspective of Fourier Analysis provides a suitable representation of the Wasserstein distance and it is already studied in the literature \cite{FouWas}. Denote a range of size $p$ of the barycenter Fourier coefficients by
\[
\opttheta_p =  \mathcal{F}_p \left( \frac{d\muhat(x)}{dx} \right),
\quad
\thetas_p =  \mathcal{F}_p \left( \frac{d\mu^*(x)}{dx} \right)
\]  
The first our result states that for some non-random matrix $\breve{D}$, independent random vectors $\{ \xi_i \}$, and some depending on $p$ constant       
\[
\left\| \breve{D} \left( \opttheta_p  - \thetas_p \right) - \sum_{i=1}^n \xi_i \right\|  = \frac{C(p)}{\sqrt{n}}
\]
Further we show that for some Gaussian vector $Z$  
\[
W_1 \left( \breve{D} ( \opttheta_p  - \thetas_p ), \, Z \right ) = \frac{C(p)}{\sqrt{n}}
\] 
and $\forall z \in \R_+$:
\[
\left | P \left( \|  \breve{D} ( \opttheta_p  - \thetas_p ) \| > z \right) -  P \left(\|Z\| > z \right) \right | = \frac{C(p)}{\sqrt{n}}
\]    
\textit{Statistical Application:} The last statement allows us to obtain the confidence region of parameter $\opttheta_p$ and describe the distribution inside the region. Besides, the bootstrap procedure  validity \cite{LimitWas} follows from our proof as well. If one samples $\|  \breve{D} ( \theta_p^{boot} -  \opttheta_p ) \|$ using bootstrap it would be close by quantiles to the random variable $\| \breve{D} ( \opttheta_p - \thetas_p ) \|$,  which also relates to the formation of the confidence region. In  \cite{bc_coords_proj} the authors demonstrate application of barycentric coordinates that allow to infer missing geometry of an input mesh using a set of 3D models. Recent article \cite{brule} shows empirically that barycenters may be helpful as a loss function in unsupervised face landmarks detection task.      

The Structure of this paper is the following. The main Theorems are in Section 2. Section 3 deals with independent parametric models and describes how one can approximate parameter deviations by a sum of independent random vectors $\{ \xi_i \}$. 
In Section 4 we explore the barycenters model, compute derivatives of the Wasserstein distance using infimal convolution of support functions and check the required assumptions from the 3-rd Section.
Section 5 contains some useful properties of the support functions.
The final part, Gaussian approximation of the parameter $\opttheta_p$, is completed in Section 6, where we prove that $\{ \xi_i \}$ is close to $Z$ by distribution.

\section{The main result} \label{main_res}

\definition[W-dual]{For two random variables $X$ and $Y\in\mathbb{R}^{\infty}$ with
densities $\varphi_{X}$ and $\varphi_{Y}$ define Wasserstein distance in dual form as
\[
W_1(\varphi_X, \varphi_Y)=\max_{\forall x: \|\nabla f(x)\|\leq 1}\{\E f(X)-\E f(Y)\}
\]
where $\forall x: \|\nabla f(x)\|\leq 1 $ means that function $f$ is $1$-Lipshits.
}
Note that if $\pi(x,y)$ is a joint distribution with marginals $\varphi_{X}$ and $\varphi_{Y}$ then this definition is equivalent to  the original (\ref{was_orig_def}), 
which follows from Kantorovich-Rubinstein duality \cite{KRdual}.  
 \definition[W-dual-regularised]{For two random variables $X$ and $Y\in\mathbb{R}^{\infty}$ with
densities $\varphi_{X}$ and $\varphi_{Y}$ and some density function $G(x)$  define a regularised Wasserstein distance in dual form as
\[
\widetilde{W}_1(\varphi_ X, \varphi_ Y)=\max_{\forall x: \|\nabla f(x)\|\leq 1} \left\{\E f(X)-\E f(Y) -   \varepsilon \int  \| \nabla f (x) \|^2  G(x) dx   \right\}
\]
} 
The regulariser term in this definition allows to bound the second derivative of the distance which will be shown below.

Consider a set of random measures (random measure is a measure-valued random element) with densities $\phi_1, \ldots, \phi_n$.  Let the barycenter measure $\muhat$ has density $\widehat{\phi}$ and Fourier coefficients $\opttheta = \theta(\widehat{\phi}) \in \R^{\infty}$. 
% \[
% \widehat{\phi}=\argmin_{\phi}\sum_{i=1}^{n} \widetilde{W}_1(\phi, \phi_i)
% \]
% Let Fourier basis $\{ \psi_k \}_{k=1}^{\infty}$ has a Gram function of the scalar product $G(x)$, such that for any function~$f$
% \[
% \langle f,\psi_{k}\rangle_{G}=\int f(x)\psi_{k}(x) G(x)dx
% \]
% and 
% \[
% \theta(\varphi)[k]=\int\varphi(x)\psi_{k}(x)dx
% \]
Denote Fourier coefficients of the other measures $\forall i: \theta_i = \theta(\varphi_i) \in \R^{\infty}$. 
% Basing on Lemma \ref{wasfou} 
Define an independent parametric model with dataset ($\theta_1, \ldots, \theta_n$) and parameter $\theta$.
\begin{equation}\label{bc_model}
L(\theta) = - \sum_{i=1}^n  \widetilde{W}_1\big(\phi(\theta), \, \phi_i(\theta_i) \big)
\end{equation}

% and  $\bigcap \mathcal{E}_{x}$ is a Sobolev ellipsoids intersection. Each ellipsoid $\mathcal{E}_x$ has matrix $K_x = \nabla^T \psi \nabla \psi^T (x)$ such that
% \[
% \left( \sum_{k \in \N^{d}_{+} }\eta_{k} \nabla \psi_k(x) \right)^{2}=\eta^{T} K_x \eta
% \]
% and
% \[
%  \bigcap \mathcal{E}_{x}=\left\{ \eta : \forall x: \eta^{T} K_x \eta \leq 1 \right\} 
% \]
% Define a positive matrix $K \circ G = \int K_x G(x) dx$ such that in case $\psi_{k}(x) = e^{i k^T x / T}$
% \[
% K \circ G =   \left(\begin{matrix}
%     1/T^2 & 0 & 0 &  \ldots 0 \\
%     0  &  \ddots & 0 &  \ldots 0 \\
%      0 & \ldots &  k^2/T^2  & \ldots 0 \\
%      \ldots & \ldots & \ldots & \ldots
% \end{matrix} \right)
% \]
\noindent Note that the model's likelihood  has minus sign and there is no normalising factor $1/n$.  According to notation of the previous Section the reference parameter value (coefficients of the  reference  barycenter) is 
\[
\theta^{*}=\argmax_{\theta}\E L(\theta)
\]
Define a local region around $\thetas$
\[
\Omega(\rr) = \{ \theta: \|D (\theta - \thetas) \| \leq \rr \}
\]
where $D$ is  Fisher matrix of the model
\[
D^2 = - \nabla^2 \E L(\thetas)
\]
Here and below operator $\nabla$ denotes Fréchet derivative. We also assume below that matrix $D$ is invertible. This assumption is not strict, since the same results can be obtained with the pseudo-inverse matrix $D^+$. Since we do not divide the model by factor $n$, the  matrix $D^2$ has an order $O(n)$.

\begin{theorem} \label{wasF} Assume that exists a generalised differentiable Fourier basis $\{\psi_k(x)\}_{k=1}^{\infty}$  with Gram matrix $G(x)$ in which for some constant $C_{D}$ and minimal eigenvalue it holds
\[
\lambda_{\min} \big( D K\circ G D \big) \geq C_{D} n
\]
where 
\[
K\circ G  = \int \nabla^T  \psi(x) \nabla \psi^T(x) G(x) dx 
\]
Let the random Fourier parameters of the dataset have a common density: $\theta_1 \ldots \theta_n \sim q(\theta)$, and it fulfills condition 
\[
  \int_{\R^{\infty}}   \| D^{-1} \nabla q(\theta) \| d\theta  = \frac{C_q}{\sqrt{n}} 
\]
Let $\opttheta, \thetas \in \R^{\infty}$ be the Fourier coefficients of the empirical and reference barycenter defined above and $\varepsilon$ be the regulariser constant of $\widetilde{W}_1$. Then with probability $\hiprobt$ 
\[
\normp{ D (\opttheta - \theta^*) -  D^{-1} \nabla L (\thetas) } 
\leq  \rombt
\]\[
\rombt \leq  \frac{ O(H_1 + t)   }{\varepsilon C^{3/2}_{D} \sqrt{n}} +   \frac{  O(t  C_q ) }{\varepsilon C^2_{D} \sqrt{n}}   + O \left(\frac{H_2}{n}\right)
\]
where $H_1$, $H_2$  are components of ellipsoid entropy (ref. Section \ref{entropy}) with matrix $D$ and eigenvalues~$\{\lambda_i\}_{i=1}^{\infty}$, such that for some absolute constant $C$
\[
H_1
\leq C \sqrt{\alpha - 1} \sqrt{\sum_i \frac{ \log^\alpha (\lambda_i^2 / \lambda^2_{\min}) }{\lambda_i^{2} / \lambda^2_{\min} }  }
\]
and
\[
H_2
\leq C  \sum_i \frac{ \lambda_{\min} }{\lambda_i}  
\]
% where $C$ is some absolute constant. 
% where $\rombt$ is defined in Section \ref{bc} and has asymptotic 
% \[
% \rombt  =  \frac{ \sqrt{n} O (\rr C_Q + \rr \sqrt{p_D} + \sqrt{2t}) }{\varepsilon \lambda_{\min} \big(D K \circ G D \big)}  
%  + o \left( \frac{   1  }{\sqrt{n}} \right)     
% \]
% and 
% where $p_D$ is an ellipsoid entropy (ref. Section \ref{entropy}) with matrix $D$ and eigenvalues~$\{\lambda_i (D)\}_{i=1}^{\infty}$ 
% \[
% p_D = \sqrt{\sum_{i=1}^{\infty} \frac{ \log^2 (\lambda_i^2(D)) }{\lambda_i^{2}(D)}  }
% \]
% and with probability $\hiprobt$ 
% \[
% \rr \leq 4 \|D^{-1} \nabla L(\thetas) \| \leq  \frac{ 8 \sqrt{n} (1 + \sqrt{2t}) }{\lambda_{\min}^{1/2} \big( D K\circ G D \big)}
% \]
\end{theorem}

\noindent We have shown that deviations of the parameter $\opttheta$ may be approximated by expression $D^{-1} \nabla L (\thetas)$ which is a sum of independent random vectors. Next one may derive Gaussian approximation for the last term and find the  correspondent normal vector $Z$.   
 Define additional Fisher matrix corresponded to the projection into the first $p$ elements of the parameter $\theta$ (we will describe it in more detail  in Section \ref{slt}).
\[
{\breve{D}}^2  =  D^2_{p\times p} - D^2_{p\times \infty} D^{-2}_{\infty\times \infty} D^2_{\infty\times p} 
\]
such that 
\[
D^2 =  \left(
\begin{matrix}
    D^2_{p \times p} & D^2_{p \times \infty} \\
    D^2_{\infty \times p} & D^2_{\infty \times \infty} 
\end{matrix} \right)
\]
and define the gradient of the projection into the first $p$ elements of the parameter~$\theta$. 
\[
\scorer = \nabla_{1 \ldots p} -   D^2_{p\times \infty} D^{-2}_{\infty\times \infty} \nabla_{p \ldots \infty}
\]
 The next result shows that vector $\opttheta$ is close by distribution to some Gaussian vector 
\[
 Z \sim \ND(0,  \Var [{\breve{D}}^{-1} \scorer L(\thetas)])
\]
\begin{theorem} \label{was_gar} Let $\opttheta_p, \thetas_p \in \R^p$ are  the first $p$ Fourier coefficients of the empirical and reference barycenters. Then under assumptions from  Theorem \ref{wasF}  with probability $(\hiprobt)$ 
\[
W_1 ({\breve{D}} (\opttheta_p - \thetas_p), \, Z) \leq \rombt +     O\left ( \frac{p \log n}{ \sqrt{C_{D} n} }   \right) 
\]and  $\forall z \in \R_{+}$
\[
\left| \P ( \| \breve{D} (\opttheta_p - \thetas_p) \| > z )  -  \P ( \| Z \| > z ) \right| 
 \leq  O\left ( \frac{\rombt}{\sqrt{p}} + \frac{\sqrt{p} \log^2 n}{ \sqrt{C_{D} n} }  \right) 
\]and for $ \rombt$ we provide an upper bound in Theorem \ref{wasF}.  
% where $\rombt$ is defined in Theorem \ref{wasF},  $C_A = O(1/z)$  is anti-concentration constant defined in Section \ref{gar} and 
% \[
% \mu_3 \leq  \frac{ 4\sqrt{2} p   }{\lambda^{1/2}_{\min} \big( D  K\circ G D \big)}
% \]
\end{theorem}

\section{Statistical learning theory }
\label{slt}
% !TEX root = was.tex
In this section we consider an infinite dimensional statistical model $L (\theta) $. Let parameter $\theta $ consists of two parts $(u, v) $, such that $u=\theta_{1\ldots p} \in \R^p$. Working with a finite dataset of size $n$ we are going to find the deviations of $\argmax L(\theta)$ basing on three model assumptions listed below. Further we will make some specifications for independent models and check that the barycenter model (\ref{bc_model}) satisfies to these assumptions. 
\subsection{General approach}
 
The Likelihood function $L(\theta) = L(\theta, \mathbb{Y})$  depends on the parameters vector $\theta = (u,v)  $ and a random dataset $\mathbb{Y}$ of size $n$. Denote parameter's MLE and reference values:
\[
\opttheta= \argmax_\theta L(\theta )
\]
\[
\thetas = \argmax_\theta \E L(\theta)
\]
We are going to study deviations of $\opttheta$ and $u$ in the following sense. For Fisher matrix $D^2$  and some projection matrix defined below  $\breve{D}^2$    the deviation $D(\opttheta - \thetas)$  may be approximated by $D^{-1} L(\thetas) $ and analogically $\breve{D} (\widehat{u} - u^*) $ by $ \breve{D}^{-1} \scorer L (\thetas)$. 
This approach bases on paper \cite{spok_pen}. Remind that by definition Fisher matrix~is 

% \begin{enumerate}
% \item $ \| \opttheta - \thetas \| $ is expected to be of order $O(1/\sqrt{n})$.
% \item $  D(\opttheta - \thetas)  \approx \xi $ 
% \item $ L(\opttheta) - L(\thetas)  \approx \| \xi \|^2 / 2 $  
% \end{enumerate}
\begin{equation}\label{matF}
D^2 =- \nabla^2 \E L(\thetas) =  \left(
\begin{matrix}
    \Ftt & \Fte \\
    \Fet & \Fee 
\end{matrix} \right)
\end{equation}
Here and below operator $\nabla$ means Fréchet derivative. We also assume below that matrix $D$ is invertible. Denote the stochastic part of the Likelihood 
\[
\zeta(\theta) = L(\theta) - \E L(\theta)
\]
It would be easier to deal with the model  if the matrix $\Fbb$ has block-diagonal view ($\Fte = \Fet= 0$). One can make parameter replacement in order to satisfy to this condition.  Define a new parameter $\vartheta = \vartheta(u,  v)$ such that
\[
\nabla_u \nabla_\vartheta^T \E L(\thetas) = \nabla_\vartheta  \nabla_u^T \E L(\thetas) = 0
\]
and 
\[
\vartheta = v +  D^{-2}_{v} \Fet u
\]
Or in other words the parameter's transformation matrix is	  
\[
 S = 
\left(\begin{matrix}
    I & 0 \\
     D^{-2}_{v} \Fet & I 
\end{matrix} \right),
\quad
 S^{-1} = 
\left(\begin{matrix}
    I &  0  \\
    - D^{-2}_{v}\Fet  & I 
\end{matrix} \right)
\]
The gradient in the new coordinates $(u, \vartheta)$  may be obtained by rule $\nabla(u, \vartheta)= (S^{-1})^T \nabla(u,  v)$. Use notation $\scorer$ for its first part  
\[
\scorer = \nabla_{u} (u, \vartheta) = \nabla_{u} -   \Fte  D_{v}^{-2} \nabla_{v}
\]
Fisher matrix after parameters replacement changes by rule \\ $\Fbb(u, \vartheta) =  (S^{-1})^T \Fbb S^{-1}$,  so in the new coordinates it has view
\[
D^2(u, \vartheta) = - \nabla^2 \E L(u^*, \vartheta^*) =  \left(
\begin{matrix}
    \breve{D}^2 & 0 \\
    0 & D_{\vartheta}^2 
\end{matrix} \right)
\]
\[
\breve{D}^2  = \Ftt - \Fte D_v^{-2} \Fet 
\] The last equation clarifies the ``strange'' structure of the projection matrix $\breve{D}^2 $. It makes parameter $u$ independent on the other part of $\theta$.   Define a local region around point $\thetas$ 
\begin{equation}\label{localR}
\Omega(\rr) = \{\theta: \| D (\theta - \thetas) \| \leq \rr \}
\end{equation}   
Now we will write down three conditions on the Likelihood derivatives which are essential for the deviations of $\opttheta$. The first and second conditions should be satisfied in the local region $\Omega(\rr)$. The third condition is required to make expansion of the previous two conditions to the whole parameter space $\R^{\infty}$. Further we will show that these conditions are also sufficient for deviation bounds of the parameter $u$, namely from  deviations bound of $\opttheta$ follows analogical bound for $u$.

\noindent\textbf{Assumption 1:} In the region $\Omega(\rr)$
\begin{EQA}
\normp{ - D^{-1}  \{ \nabla \E L(\theta) -  \nabla \E L(\thetas) \} - D (\theta - \thetas) } & \leq & \delta(\rr) \rr
\end{EQA}
\textbf{Assumption 2:} In the region $\Omega(\rr)$ with probability $\hiprobt$
\begin{EQA}
\sup_{\theta \in \Omega(\rr)} \normp{  D^{-1}  \{ \nabla \zeta (\theta) - \nabla \zeta (\thetas) \}  } & \leq &  \zz(t) \rr
\end{EQA}
\textbf{Assumption 3:}
The Likelihood function is convex  $(-\nabla^2 L(\theta) \geq 0)$ or the expectation of Likelihood function is upper-bounded by a strongly convex function $(\exists b > 0: \,  \E L(\thetas) - \E L(\theta) \geq b \| D (\theta - \thetas) \|^2$).

\noindent Use notation 
\begin{equation}\label{romb}
  \rombt =   \{\delta(\rr)  +  \zz(t)\} \rr
\end{equation}
\begin{theorem}
\label{devbound}
Let the Likelihood function be convex $(-\nabla^2 L(\theta) \geq 0)$ and for $\rr \leq \rr_0$  it holds $\delta(\rr)  +  \zz(t) \leq 1/2$. Then under Assumptions 1,2 with probability $\hiprobt$

\[
\| D (\opttheta - \theta^*) -  D^{-1} \nabla L (\thetas) \| 
\leq  \rombt
\]
\[
\| \breve{D} (\widehat{u} - u^*) -  \breve{D}^{-1} \scorer L (\thetas) \| 
\leq \rombt
\]
and 
\[
\rr \leq \rr_0 = 4 \|D^{-1} \nabla L(\thetas) \|
\]
\end{theorem}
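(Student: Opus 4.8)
The plan is to run the standard Fisher-expansion argument: combine the first-order optimality conditions at $\opttheta$ and $\thetas$ with the deterministic bias control of Assumption 1 and the stochastic equicontinuity of Assumption 2, and then close the localization loop using concavity (Assumption 3).

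First I would record the first-order conditions. Since $L$ is concave and $\opttheta$ is its unconstrained maximizer, $\nabla L(\opttheta)=0$; likewise $\nabla \E L(\thetas)=0$. Splitting $L=\E L+\zeta$, the identity $\nabla L(\opttheta)=0$ reads $\nabla \E L(\opttheta)=-\nabla\zeta(\opttheta)$, while $\nabla\zeta(\thetas)=\nabla L(\thetas)$. Assuming for the moment that $\opttheta\in\Omega(\rr)$, I would apply Assumption 1 at $\theta=\opttheta$ to replace $-D^{-1}\nabla\E L(\opttheta)$ by $D(\opttheta-\thetas)$ up to an error of norm $\le\delta(\rr)\rr$, and Assumption 2 to replace $D^{-1}\nabla\zeta(\opttheta)$ by $D^{-1}\nabla\zeta(\thetas)=D^{-1}\nabla L(\thetas)$ up to an error of norm $\le\zz(t)\rr$. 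Chaining the two substitutions through $D^{-1}\nabla\zeta(\opttheta)=-D^{-1}\nabla\E L(\opttheta)$ yields
\[
\normp{ D(\opttheta-\thetas)-D^{-1}\nabla L(\thetas) }\le(\delta(\rr)+\zz(t))\rr=\rombt,
\]
which is the second assertion, valid on the event where Assumption 2 holds.

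The crux, and the step I expect to be the main obstacle, is the localization: the expansion above presupposes $\opttheta\in\Omega(\rr)$, so I must pin down $\rr$ self-consistently and rule out circularity. Here concavity does the work. On the boundary sphere $\{\theta:\|D(\theta-\thetas)\|=\rr\}$ I would evaluate the radial derivative $\langle\nabla L(\theta),\theta-\thetas\rangle$. Decomposing the gradient as before, Assumption 1 gives $\langle\nabla\E L(\theta),\theta-\thetas\rangle\le-(1-\delta(\rr))\rr^2$, while Assumption 2 together with Cauchy--Schwarz gives $\langle\nabla\zeta(\theta),\theta-\thetas\rangle\le\|D^{-1}\nabla L(\thetas)\|\,\rr+\zz(t)\rr^2$. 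Summing and using $\delta(\rr)+\zz(t)\le1/2$,
\[
\langle\nabla L(\theta),\theta-\thetas\rangle\le\rr\left(\|D^{-1}\nabla L(\thetas)\|-\tfrac12\rr\right),
\]
which is strictly negative once $\rr>2\|D^{-1}\nabla L(\thetas)\|$. Along any outward ray from $\thetas$ the scalar derivative is non-increasing by concavity, so a point from which $L$ strictly decreases along every direction on $\partial\Omega(\rr)$ cannot be exceeded by anything outside; hence $\opttheta\in\Omega(\rr)$ for every such $\rr$, and in particular $\rr=4\|D^{-1}\nabla L(\thetas)\|$ is admissible, giving the radius bound. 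This is exactly what licenses the hypothesis used in the expansion, closing the loop on the probability-$\hiprobt$ event carried by Assumption 2.

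Finally, for the projected statement I would pass to the block-diagonalizing coordinates $(u,\vartheta)$ introduced above, in which $D^2$ is block-diagonal with blocks $\breve{D}^2$ and $D_\vartheta^2$, the $u$-component of $D(\theta-\thetas)$ is $\breve{D}(\widehat{u}-u^*)$, and the $u$-component of $D^{-1}\nabla L$ is $\breve{D}^{-1}\scorer L$. Since orthogonal projection onto the $u$-block cannot increase the Euclidean norm, restricting the already-proved inequality $\normp{D(\opttheta-\thetas)-D^{-1}\nabla L(\thetas)}\le\rombt$ to that block yields
\[
\normp{ \breve{D}(\widehat{u}-u^*)-\breve{D}^{-1}\scorer L(\thetas) }\le\rombt,
\]
the third assertion. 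The role of Assumption 3 in the non-strictly-concave situation (the $\rr_v$ bookkeeping) is only to extend the local control to all of $\R^\infty$; in the concave case treated here it is subsumed by the boundary argument.
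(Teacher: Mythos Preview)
Your argument is correct and follows the same overall route as the paper: first-order conditions combined with Assumptions 1 and 2 for the Fisher expansion, concavity for the localization, and the block-diagonalizing coordinate change for the projected bound. The one visible variation is in the localization step: the paper bounds the likelihood drop $L(\thetas)-L(\theta)$ on the boundary by a second-order Taylor expansion involving $\nabla^{2}\zeta$ and $D(\theta_{0})$, whereas you test the radial derivative $\langle\nabla L(\theta),\theta-\thetas\rangle$ directly from Assumptions 1 and 2; both arrive at the same inequality $\rr(1-\delta(\rr)-\zz(t))\le 2\|D^{-1}\nabla L(\thetas)\|$. For the projected statement the paper writes out explicitly the norm invariance under the transformation $S$, which is exactly what justifies your ``projection does not increase the norm'' step.
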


\begin{proof} From $(-\nabla^2 L(\theta) \geq 0)$ and ($L(\opttheta) > L(\thetas)$) follows that the local region $\Omega(\rr)$ that includes $\opttheta$ should cover the next region 

\[
\Omega(\rr)  \supset \{\theta: L(\theta) \geq L(\thetas) \}
\]Use notation 
\[
D^2(\theta) = - \nabla^2 \E L(\theta)
\]
Estimate the minimal possible radius $\rr_0$ that satisfy to the previous condition. Let $\theta_0$ be some point between $\theta$ and $\thetas$ that is used in Taylor expansion with central point $\thetas$.
\[
0 \geq L(\thetas) - L(\theta) 
\]\[
= -(\theta-\thetas)^T \nabla L(\thetas) - \frac{1}{2} (\theta - \thetas)^T \nabla^2 \zeta(\theta_0) (\theta - \thetas)  + \frac{1}{2} \| D(\theta_0) (\theta - \thetas) \|^2
\]Assumption 1 provides 
\begin{align*}
 \| D(\theta_0) (\theta - \thetas) \|^2 
& =\| D (\theta - \thetas) \|^2 + \| (D(\theta_0) - D) (\theta - \thetas) \|^2 \\
& \geq  \rr^2 - \delta(\rr) \rr^2
 \end{align*}
Assumption 2 provides with probability  ($\hiprobt$)
\[
(\theta - \thetas)^T \nabla^2 \zeta(\theta_0) (\theta - \thetas) \leq \zz(t) \rr^2
\]
Put these two properties into the initial inequality 
\[
0 \geq - \|D^{-1} \nabla L(\thetas) \| \rr - \frac{ \zz(t) }{2} \rr^2 + \frac{1-\delta(\rr)}{2} \rr^2   
\]
\[
\rr (1 - \delta(\rr) - \zz(t)) \leq 2 \|D^{-1} \nabla L(\thetas) \|
\]So one may set under the assumption $\delta(\rr)  +  \zz(t) \leq 1/2$
\[
\rr_0 = 4 \|D^{-1} \nabla L(\thetas) \|
\]From Assumptions 1,2 also  follows that
\[
\| D (\opttheta - \theta^*) +  D^{-1} \{\nabla L (\opttheta) -  \nabla L (\thetas) \}   \| 
\leq  \rombt
\]
Since $\nabla L (\opttheta) = 0$ we have
\[
\| D (\opttheta - \theta^*) -  D^{-1} \nabla L (\thetas) \| 
\leq  \rombt
\]Note that for the coordinates transform $S$  there exists  the following \textit{invariant}:
\[
\left\| 
\left(
\begin{matrix}
    \breve{D} & 0 \\
    0 & D_{\vartheta} 
\end{matrix} \right)
\left( \begin{matrix}
    u  - u^* \\
    \vartheta - \vartheta^* 
\end{matrix} \right)
 +
 \left(
\begin{matrix}
    \breve{D}^{-1} & 0 \\
    0 & D_{\vartheta}^{-1} 
\end{matrix} \right) 
\left( \begin{matrix}
    \scorer L(u,  \vartheta)  - \scorer L(u^*,  \vartheta^*)\\
    \nabla_\vartheta L(u,  \vartheta)  + \nabla_\vartheta L(u^*,  \vartheta^*) 
\end{matrix} \right)
\right\|
\]
\[
  = \|  D(\theta - \thetas) +  D^{-1} \{\nabla L (\theta) - \nabla L (\thetas) \} \|  
\]
Since 
\[
\left\| 
\left(
\begin{matrix}
    \breve{D} & 0 \\
    0 & D_{\vartheta} 
\end{matrix} \right)
\left( \begin{matrix}
    u  \\
    \vartheta  
\end{matrix} \right) 
\right \| ^2  =  \theta ^T S^T [(S^{-1})^T D^2 (S^{-1})] S \theta  = \| D  \theta  \|^2 
\]
\[
\left \| \left(
\begin{matrix}
    \breve{D}^{-1} & 0 \\
    0 & D_{\vartheta}^{-1} 
\end{matrix} \right) 
\left( \begin{matrix}
    \scorer \\
    \nabla_\vartheta  
\end{matrix} \right)
\right\|^2 =
\nabla^T S^{-1}   [(S^{-1})^T D^2 (S^{-1})] ^{-1}  (S^{-1})^T \nabla
\]
\[
=  \nabla^T D^{-2} \nabla
\]
\[
\left( \begin{matrix}
    \scorer \\
    \nabla_\vartheta  
\end{matrix} \right)^T \left( \begin{matrix}
    u  \\
    \vartheta  
\end{matrix} \right)  = \nabla^T S^{-1} S \theta = \nabla^T \theta
\]Subsequently, basing on this invariant we obtain the bound for projection $u$
\[
\| \breve{D} (\widehat{u} - u^*) -  \breve{D}^{-1} \scorer L (\thetas) \| 
\]
\[
  \leq \| D (\opttheta - \theta^*) -  D^{-1} \nabla L (\thetas) \| \leq \rombt
\]

\end{proof}

\begin{remark} The case with a non-convex function $L$ is considered in \cite{spok_pen}, where the author involve an upper-bound convex function for $\E L$ decline (Theorem 2.1).  
\end{remark}

\subsection{Ellipsoid entropy}
\label{entropy}

\noindent The upper bound $\zz(t)$ of the random process in \textbf{Assumption 2} with parameter $\theta \in \Omega(\rr)$ requires entropy computation of the ellipsoid $\Omega(\rr)$. It will be useful to us  in the next section and below we provide a short excerpt on this topic. The general formula for the covering number $N(\varepsilon, \Omega)$ of a convex set $\Omega$ in $\mathbb{R}^{p}$ with Euclidean metric is
\[
N(\varepsilon,\Omega)\leq\frac{volume(\Omega+(\varepsilon/2)B_{1})}{volume(B_{1})}\left(\frac{2}{\varepsilon}\right)^{p}
\]where $B_1$ is the unit ball. Remind that $N(\varepsilon,\Omega)$ equals to the minimal count of balls with radius $\varepsilon$ that is sufficient to cover $\Omega$.   We will need two components of the ellipsoid entropy:
\begin{equation}\label{Hdef}
H_1 =  \int_{0}^{1 / 2}\sqrt{ \log N(\varepsilon \rr, \Omega(\rr))}d\varepsilon,
\quad
H_2 =  \int_{0}^{1 / 2} \log N(\varepsilon \rr, \Omega(\rr)) d\varepsilon
\end{equation}
% \noindent \textit{Ball entropy:} Let $\Omega = B_\rr$ and $d(\theta_1, \theta_2) =  \| \theta_1 - \theta_2 \| $ then 
% \[
% N(\varepsilon,  B_1) \leq \left(1+\frac{2}{\varepsilon}\right)^{p}
% \]
% and since $N( \varepsilon \rr, B_\rr) = N( \varepsilon, B_1)$ 
% \[
% \int_0^{\rr/2} \sqrt{\log N( \varepsilon, B_\rr)} d \varepsilon = 
%  \rr  \int_0^{1/2} \sqrt{\log N( \varepsilon,  B_1 )} d \varepsilon   
% \]
% \[
% \leq \rr \sqrt{p} \int_{0}^{1/2} \sqrt{ \log(3/\varepsilon) } d\varepsilon \leq  0.83 \rr  \sqrt{p}    
% \]
% and 
% \[
% \int_0^{\rr/2} \log N ( \varepsilon, B_\rr) d \varepsilon \leq  
% 1.4 \rr  p 
% \]

\begin{lemma}\label{Hbound} Let $\| D^{-2} \| = 1$. Then for the entropy components (\ref{Hdef}) of the ellipsoid $\Omega(\rr)$ with matrix $D^2$ defined in expression (\ref{localR}) it holds

\[
H_1
\leq C \sqrt{\alpha - 1} \sqrt{\sum_i \frac{ \log^\alpha (\lambda_i^2(D)) }{\lambda_i^{2}(D)}  }
\]
and
\[
H_2
\leq C  \sum_i \frac{ 1 }{\lambda_i(D)}  
\]where $C$ is some absolute constant. 

\end{lemma}  

\begin{proof}
Function $\log N(\varepsilon \rr, \Omega(\rr))$ is monotone-decreasing. One may split the integration interval of  (\ref{Hdef}) into the following parts:
\[
\left[\frac{1}{2}, \frac{1}{4}\right], \quad \left[\frac{1}{4}, \frac{1}{8}\right], 
\quad \left[\frac{1}{8}, \frac{1}{16}\right] , \quad \ldots 
\]Take corresponded  values $N(\rr/4, \Omega(\rr))$, $N(\rr/8, \Omega(\rr))$, $N(\rr/16, \Omega(\rr))$, $\ldots$ and obtain integral approximation by the histogram 
\[
H_1 \leq  \sum_{k=2}^{\infty} \frac{1}{2^k} \sqrt{\log N \left(\frac{\rr}{2^k}, \, \Omega(\rr) \right)}
\]
and 
\[
H_2 \leq  \sum_{k=2}^{\infty} \frac{1}{2^k} \log N \left(\frac{\rr}{2^k}, \, \Omega(\rr) \right)
\]
Theorem H.7.1 in \cite{spok_pen} provides upper bounds for the right parts of the previous expressions and completes the proof. 
\end{proof}

\subsection{Independent models}

Consider independent models (models with independent observations) and obtain a simpler variant of  \textbf{Assumption 2}. Involve three basic Lemmas for that.

\begin{lemma}[Bernstein\textquoteright s inequality Theorem 2.10  \cite{Concentration}]\label{bern}
Let $X_{1}\ldots X_{n}$ be independent real-valued random variables.
Assume that exist positive numbers $\mathbf{v}$ and $R$ such that 
\[
\mathbf{v^{2}=}\sum_{i=1}^{n}\E X_{i}^{2}
\]
and for all integers $q\geq3$
\[
\sum_{i=1}^{n}\E\left[X_{i}\right]_{+}^{q}\leq\frac{q!}{2}\mathbf{v^{2}}R^{q-2}
\]
Then for all $\lambda\in(1,1/R)$ 
\[
\log\E e^{\lambda\sum_{i}(X_{i}- \E X_{i})}\leq\frac{\mathbf{v}^{2}\lambda^{2}}{2(1-R\lambda)}
\]

\end{lemma}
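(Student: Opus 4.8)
The plan is to reproduce the classical Cram\'er--Chernoff argument for the cumulant generating function, in which the factorial moment condition is engineered precisely so as to produce a convergent geometric series. First I would use independence to tensorize: since $X_1,\dots,X_n$ are independent, the logarithm of the moment generating function of the centered sum splits as
\[
\log \E \exp\!\Big(\lambda \sum_{i=1}^n (X_i - \E X_i)\Big) = \sum_{i=1}^n \log \E \exp\big(\lambda (X_i - \E X_i)\big),
\]
so it suffices to control each summand and add the bounds. For a single term I would linearize the outer logarithm through $\log u \le u - 1$, reducing the task to bounding $\E \exp(\lambda(X_i - \E X_i)) - 1$, and then, using $\E(X_i - \E X_i) = 0$, rewrite this quantity as $\E[\exp(\lambda(X_i-\E X_i)) - 1 - \lambda(X_i-\E X_i)]$.

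The key elementary step is a pointwise bound on $e^{\lambda x} - 1 - \lambda x$ for $\lambda > 0$ that isolates the upper tail. Splitting into the cases $x \ge 0$ and $x < 0$ and using the standard estimate $e^{u} - 1 - u \le u^2/2$ valid for $u \le 0$, I would establish
\[
e^{\lambda x} - 1 - \lambda x \;\le\; \frac{\lambda^2 x^2}{2} + \sum_{q \ge 3} \frac{\lambda^q [x]_+^q}{q!}.
\]
Taking expectations and summing over $i$, the quadratic term contributes $\tfrac{\lambda^2}{2}\sum_i \Var(X_i) \le \tfrac{\lambda^2}{2}\sum_i \E X_i^2 = \tfrac{\lambda^2 \mathbf{v}^2}{2}$ (variance bounded by the second moment), while the higher-order terms are exactly the ones controlled by the hypothesis $\sum_i \E[X_i]_+^q \le \tfrac{q!}{2}\mathbf{v}^2 R^{q-2}$.

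The payoff of the factorial normalization is that the $1/q!$ coming from the Taylor coefficients cancels against the $q!/2$ in the moment condition, leaving
\[
\sum_{q\ge 3}\frac{\lambda^q}{q!}\cdot\frac{q!}{2}\mathbf{v}^2 R^{q-2} = \frac{\mathbf{v}^2 \lambda^2}{2}\sum_{j\ge 1}(\lambda R)^j = \frac{\mathbf{v}^2 \lambda^2}{2}\cdot\frac{\lambda R}{1-\lambda R},
\]
which converges precisely on the range $\lambda \in (0, 1/R)$, i.e.\ whenever $\lambda R < 1$. Adding the quadratic contribution gives $\tfrac{\mathbf{v}^2\lambda^2}{2}\big(1 + \tfrac{\lambda R}{1-\lambda R}\big) = \tfrac{\mathbf{v}^2\lambda^2}{2(1-\lambda R)}$, which is the claimed bound.

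I expect the only delicate point to be the pointwise inequality above: one must treat the negative excursions of $X_i$ separately (and keep track of the centering $X_i - \E X_i$ relative to the positive-part moments appearing in the hypothesis), since a naive termwise bound of the even central moments by positive-part moments fails for even $q$. Once the second moment and the positive-part moments are isolated as the only surviving quantities, everything else is routine summation of a geometric series.
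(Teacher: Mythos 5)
The paper offers no proof of this lemma at all --- it is quoted verbatim from \cite{Concentration} --- so the benchmark is the standard argument there, and your proposal follows exactly that template: tensorize by independence, linearize the logarithm via $\log u \le u-1$, bound $e^{\lambda x}-1-\lambda x$ pointwise by $\frac{\lambda^2 x^2}{2}+\sum_{q\ge3}\frac{\lambda^q [x]_+^q}{q!}$ (quadratic bound on the negative half-line, full Taylor series on the positive one), and sum a geometric series whose $q!$-cancellation and convergence radius $\lambda R<1$ you handle correctly (the range $(1,1/R)$ in the statement is evidently a typo for $(0,1/R)$, which is what your argument actually uses).

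There is, however, one genuine gap, sitting exactly at the point you flag in your last paragraph without resolving it. You apply the pointwise expansion to the centered variable $Y_i=X_i-\E X_i$, so the higher-order terms you must control are $\E[X_i-\E X_i]_+^q$, whereas the hypothesis bounds $\sum_i \E[X_i]_+^q$ for the \emph{uncentered} variables. These are not comparable in the direction you need: when $\E X_i<0$ one has $[X_i-\E X_i]_+\ge [X_i]_+$ pointwise, and for instance with $X_i\in\{-2,0\}$ equiprobable, $\E[X_i]_+^q=0$ while $\E[X_i-\E X_i]_+^q=1/2$. So the claim that the higher-order terms ``are exactly the ones controlled by the hypothesis'' is false as written, and patching it via $[Y_i]_+\le [X_i]_+ + |\E X_i|$ produces cross terms in $|\E X_i|^q$ that the assumptions do not control either. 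The fix is a one-line rearrangement, and is how \cite{Concentration} proceeds: apply $\log u\le u-1$ \emph{before} centering,
\[
\log \E e^{\lambda(X_i-\E X_i)} \;=\; \log \E e^{\lambda X_i} - \lambda \E X_i \;\le\; \E\bigl[e^{\lambda X_i}-1-\lambda X_i\bigr],
\]
and then invoke your pointwise bound with $x=X_i$ itself. This lands the quadratic term directly on $\frac{\lambda^2}{2}\sum_i \E X_i^2 = \frac{\lambda^2\mathbf{v}^2}{2}$ --- note the hypothesis normalizes by raw second moments rather than variances, a hint that the expansion is meant to be in $X_i$, so your variance step is not needed --- and puts the $q\ge3$ terms on $\sum_i\E[X_i]_+^q$ exactly as assumed, after which your geometric-series computation $\frac{\mathbf{v}^2\lambda^2}{2}\bigl(1+\frac{\lambda R}{1-\lambda R}\bigr)=\frac{\mathbf{v}^2\lambda^2}{2(1-\lambda R)}$ goes through verbatim.
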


\begin{lemma}[Dudley\textquoteright s entropy integral Lemma 13.1 \cite{Concentration}] 
\label{dudl}
Let $\Omega$ be a finite pseudometric space and let $f(\theta)$ ($\theta\in \Omega$) be a collection of random variables such that for some constants $a,\mathbf{v},R>0$,
for all $\theta_1,\theta_2 \in \Omega$ and all $0 < \lambda < (Rd(\theta_1,\theta_2))^{-1}$
\[
\log\E \exp \{\lambda(f(\theta_1) -  f(\theta_2)) \}\le a\lambda d(\theta_{1}, \theta_{2})+\frac{\mathbf{v^{2}}\lambda^{2}d^{2}(\theta_{1}, \theta_{2})}{2(1 - R\lambda d(\theta_{1}, \theta_{2}))}
\]
Then for any $\theta_{0}\in \Omega$, 
\[
\E[\sup_{\theta} f(\theta) - f(\theta_{0}) ]\leq3a\rr+12\mathbf{v}\int_{0}^{\rr/2}\sqrt{\log N(\varepsilon,\Omega)}d\varepsilon+12R\int_{0}^{\rr/2}\log N(\varepsilon,\Omega)d\varepsilon
\]
where $\rr=\sup_{\theta \in \Omega}d(\theta, \theta_{0})$ and $N(\varepsilon, \Omega)$ is covering number.

\end{lemma}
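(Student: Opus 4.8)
The plan is to prove the bound by the classical chaining (Dudley) argument, converting the Bernstein-type control of increments into a bound on the expected supremum. I would fix the base point $\theta_0$ and introduce dyadic scales $\varepsilon_k = \rr\, 2^{-k}$ for $k = 0, 1, 2, \ldots$. At each scale let $\Omega_k \subset \Omega$ be a minimal $\varepsilon_k$-net, so that $|\Omega_k| = N(\varepsilon_k, \Omega)$ and $\Omega_0 = \{\theta_0\}$; write $\pi_k(\theta)$ for a nearest point of $\Omega_k$ to $\theta$, so $d(\theta, \pi_k(\theta)) \le \varepsilon_k$. Because $\Omega$ is finite there is a scale $K$ beyond which every net is all of $\Omega$ and $\pi_K(\theta) = \theta$, which makes the telescoping identity
\[
f(\theta) - f(\theta_0) = \sum_{k=1}^{K} \big\{ f(\pi_k(\theta)) - f(\pi_{k-1}(\theta)) \big\}
\]
a genuine finite sum. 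Every link in it joins two net points at distance $d(\pi_k(\theta), \pi_{k-1}(\theta)) \le \varepsilon_k + \varepsilon_{k-1} \le 3 \varepsilon_k =: \delta_k$.

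The key step is a single-scale maximal inequality. At level $k$ the collection of links ranges over at most $|\Omega_k|\,|\Omega_{k-1}| \le N(\varepsilon_k,\Omega)^2 =: M_k$ distinct increments, each of the form $X = f(\theta') - f(\theta'')$ with $d(\theta',\theta'') \le \delta_k$, so by hypothesis $\log \E e^{\lambda X} \le a\lambda\delta_k + \mathbf{v}^2\lambda^2\delta_k^2/[2(1 - R\lambda\delta_k)]$ for $0 < \lambda < (R\delta_k)^{-1}$. Combining $\lambda \E \max_j X_j \le \log M_k + \max_j \log \E e^{\lambda X_j}$ (from Jensen and a union bound over the exponential moments) with this estimate and then minimizing over $\lambda$ — splitting the Bernstein moment generating function into a sub-Gaussian part, which after optimization contributes the term $\mathbf{v}\delta_k\sqrt{2\log M_k}$, and a sub-exponential part, contributing $R\delta_k\log M_k$ — yields
\[
\E \max_{j \le M_k} X_j \le a\delta_k + \mathbf{v}\,\delta_k\sqrt{2\log M_k} + R\,\delta_k \log M_k .
\]

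Finally I would bound $\E[\sup_\theta f(\theta) - f(\theta_0)]$ by summing the per-level inequality over the telescoping decomposition, using $\log M_k \le 2\log N(\varepsilon_k,\Omega)$ and $\delta_k = 3\varepsilon_k$. The drift terms sum to $3a\sum_{k\ge1}\varepsilon_k = 3a\rr$; and since $N(\cdot,\Omega)$ is non-increasing, each dyadic term obeys $\varepsilon_k \sqrt{\log N(\varepsilon_k)} \le 2\int_{\varepsilon_{k+1}}^{\varepsilon_k}\sqrt{\log N(\varepsilon)}\,d\varepsilon$ (and likewise without the square root), so the sub-Gaussian and sub-exponential contributions telescope into $12\mathbf{v}\int_0^{\rr/2}\sqrt{\log N(\varepsilon,\Omega)}\,d\varepsilon$ and $12R\int_0^{\rr/2}\log N(\varepsilon,\Omega)\,d\varepsilon$ respectively, which is the claimed bound. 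The main obstacle is the single-scale maximal inequality: one must carry out the $\lambda$-optimization of the Bernstein moment generating function subject to the constraint $\lambda\delta_k < 1/R$ so that both the $\sqrt{\log M_k}$ and $\log M_k$ regimes appear with the correct constants, since it is precisely these constants that make the dyadic sums collapse exactly onto the two entropy integrals; the chaining decomposition and the sum-to-integral comparison are then routine bookkeeping.
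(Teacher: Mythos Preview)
Your chaining argument is correct and is exactly the standard route to Dudley's bound with Bernstein-type increments: dyadic nets, telescoping links of length at most $3\varepsilon_k$, a per-scale maximal inequality obtained by optimizing the Bernstein moment generating function (splitting into the sub-Gaussian $\sqrt{\log N}$ and sub-exponential $\log N$ regimes), and then the sum-to-integral comparison. The constants line up as you say.

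Note, however, that the paper does not give its own proof of this lemma: it is quoted verbatim from the cited reference \cite{Concentration} and used as a black box in the proof of Theorem~\ref{ass2cond}. So there is no ``paper's own proof'' to compare against; your argument is essentially the one found in that reference.
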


\begin{lemma}[Bousquet's inequality Theorem 12.5 \cite{Concentration}] \label{buske} Consider independent random
variables $X_{1}\ldots X_{n}$ and let  $\mathcal{F}:X\rightarrow\mathbb{R}$
be countable set of functions that satisfy conditions $\E f(X_{i})=0$
and $\|f\|_{\infty}\leq R$. Define 
\[
Z=\sup_{f\in\mathcal{F}} \sum_{i=1}^{n}f(X_{i})
\]
Let $\textbf{v}^{2}\geq  \sup_{f \in \mathcal{F}}  \sum_{i=1}^{n}\E f^{2}(X_{i})$
then with probability $\hiprobt$
\[
Z<\E Z + \sqrt{2t(\textbf{v}^{2}+2R\E Z)}   +\frac{t R}{3}
\]
\end{lemma}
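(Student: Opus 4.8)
This is the classical concentration inequality of Bousquet for the supremum of an empirical process (as cited from \cite{Concentration}), and the natural route is the entropy (Herbst) method. Throughout write $V = \mathbf{v}^{2} + 2R\,\E Z$ and $\psi(\lambda) = \log \E e^{\lambda(Z - \E Z)}$ for $\lambda > 0$. By the exponential Markov inequality, $\P(Z \ge \E Z + s) \le \exp\bigl(-\sup_{\lambda > 0}\{\lambda s - \psi(\lambda)\}\bigr)$, so the entire task reduces to producing a Bennett-type bound on $\psi$ and then carrying out the Chernoff optimisation.

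First I would pass from the $n$-dimensional functional $Z = Z(X_{1},\ldots,X_{n})$ to a one-coordinate estimate via the tensorisation (subadditivity) of entropy. With $\operatorname{Ent}(W) = \E[W\log W] - \E W \log \E W$ and $\operatorname{Ent}_{i}$ the same functional taken in $X_{i}$ alone (the remaining variables frozen), subadditivity gives $\operatorname{Ent}(e^{\lambda Z}) \le \sum_{i=1}^{n} \E\bigl[\operatorname{Ent}_{i}(e^{\lambda Z})\bigr]$. A direct computation identifies the left-hand side, after dividing by $\E e^{\lambda Z}$, with the derivative combination $\lambda \psi'(\lambda) - \psi(\lambda)$, so the entropy bound becomes a first-order differential inequality for $\psi$ with initial data $\psi(0) = \psi'(0) = 0$.

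The technical heart is the bound on each conditional entropy. Here I would introduce the leave-one-out variables $Z_{i} = \sup_{f\in\mathcal{F}}\sum_{j\neq i} f(X_{j})$, which are independent of $X_{i}$, and exploit the structural facts $0 \le Z - Z_{i} \le R$ (the upper bound following from $Z - Z_{i} \le \hat f(X_{i}) \le R$ for the maximiser $\hat f$ of $Z$). Combining the variational characterisation of conditional entropy with these facts, the centring $\E f(X_{i}) = 0$, and the variance proxy $\mathbf{v}^{2} \ge \sup_{f}\sum_{i} \E f^{2}(X_{i})$, one bounds each $\operatorname{Ent}_{i}(e^{\lambda Z})$ by a controlled exponential moment of $Z - Z_{i}$; summing over $i$ produces $V$, the correction $2R\,\E Z$ being exactly what the boundedness plus centring contribute. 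Feeding this into the differential inequality and integrating yields the Bennett log-moment bound $\psi(\lambda) \le \tfrac{V}{R^{2}}\bigl(e^{\lambda R} - \lambda R - 1\bigr)$.

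Finally I would convert this into the stated tail. Using the elementary inequality $e^{x} - x - 1 \le \tfrac{x^{2}/2}{1 - x/3}$ for $0 \le x < 3$, the Bennett bound upgrades to the sub-gamma bound $\psi(\lambda) \le \tfrac{V\lambda^{2}}{2(1 - R\lambda/3)}$ on $0 < \lambda < 3/R$. The standard sub-gamma-to-tail step (optimising $\lambda s - \tfrac{V\lambda^{2}}{2(1-R\lambda/3)}$ over $\lambda$) then gives $\P\bigl(Z \ge \E Z + \sqrt{2Vt} + \tfrac{tR}{3}\bigr) \le e^{-t}$, i.e. with probability $\hiprobt$ the complementary event holds, which is precisely the claim with $V = \mathbf{v}^{2} + 2R\,\E Z$. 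The main obstacle is the conditional-entropy estimate of the third paragraph: extracting the sharp constant that produces the $tR/3$ scale (rather than a cruder $tR$) is where the argument must delicately balance the boundedness $Z - Z_{i} \le R$, the centring, and the weak variance $\mathbf{v}^{2}$; by contrast the tensorisation and the final Chernoff step are routine once that estimate is in hand.
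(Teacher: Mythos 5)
The paper never proves this lemma: it is imported verbatim from \cite{Concentration} (Boucheron--Lugosi--Massart), so there is no in-paper argument to compare against. Your sketch follows exactly the route of that cited source -- tensorisation of entropy, the Herbst argument turning the entropy bound into a differential inequality for $\psi(\lambda)=\log\E e^{\lambda(Z-\E Z)}$, a Bennett-type log-moment bound with variance factor $V=\mathbf{v}^{2}+2R\,\E Z$, and the standard sub-gamma Chernoff optimisation giving $\P\bigl(Z\ge \E Z+\sqrt{2Vt}+tR/3\bigr)\le e^{-t}$ -- so the architecture is the right one, and the final conversion from the moment bound to the tail is carried out correctly.

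Two caveats, one an error and one a genuine gap. First, the structural claim $0\le Z-Z_{i}\le R$ is wrong on the lower side: letting $\hat f_{i}$ attain $Z_{i}$, one only gets $Z\ge Z_{i}+\hat f_{i}(X_{i})$, i.e.\ $Z-Z_{i}\ge -R$; nonnegativity fails unless the class consists of nonnegative functions. What Bousquet's argument actually uses is the one-sided bound $Z-Z_{i}\le \hat f(X_{i})\le R$ (with $\hat f$ attaining $Z$) together with the self-bounding-type inequality $\sum_{i}(Z-Z_{i})\le \sum_{i}\hat f(X_{i})=Z$, and it is the latter, combined with the centring $\E f(X_{i})=0$, that produces the $2R\,\E Z$ correction in $V$. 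Second, you explicitly defer the conditional-entropy estimate, and for this particular theorem that deferral hides the entire difficulty: obtaining the Bennett bound with the sharp factor $V=\mathbf{v}^{2}+2R\,\E Z$ (rather than the weaker constants in the earlier inequalities of Massart and of Rio) was precisely Bousquet's contribution, and it requires a delicate convexity argument (in the book's treatment, a careful analysis of $\phi(u)=e^{u}-u-1$ applied to $-\lambda(Z-Z_{i})$, exploiting that $\phi(-u)/u^2$ is decreasing, plus the corrected structural facts above), not merely ``combining the variational characterisation with these facts.'' So as a blind proposal this correctly identifies the proof strategy of the cited reference and executes its routine outer layers, but it is incomplete at the one step where the stated constants could fail.
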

\noindent Apply these lemmas in order to simplify  \textbf{Assumption 2} for  independent models. The likelihood  of  independent model is a sum of independent functions:  
\[
(L-\E L)(\theta) = \zeta(\theta) =  \sum_{i=1}^n \zeta_i(\theta) 
\]
Note that $\zeta_i $ depends from the implicit i-th element from the dataset, such that $\zeta_i(\theta) = \zeta_i(\theta, Y_i)$.    
\begin{theorem} \label{ass2cond} Let $D$ be the  matrix from Assumption 2 and  $\forall \theta \in \Omega(\rr)$ (ref. definition~\ref{localR})
\[
\sup_{\| u \| = 1} \sum_{i=1}^n \E (u^T D^{-1} \nabla^2 \zeta_i (\theta) D^{-1} u)^2 \leq \textbf{v}^2 
\]
and
\[
\| D^{-1} \nabla^2 \zeta_i (\theta) D^{-1} \| \leq R
\]
Then Assumption 2 fulfills inside $\Omega(\rr)$ with probability $\hiprobt$ and
\[
\zz(t) \leq  \textbf{v} (12 \sqrt{2} H_1 + \sqrt{2t}) + R (24 H_2 + 24 \sqrt{H_2 t} + t/3)
\]where $H_1$ and $H_2$ are the components of ellipsoid entropy (\ref{Hdef}).
% Moreover
% \[
% E = 6 \textbf{v} \sqrt{2 p_D} + 12 R p_D
% \]and
% \[
% \zz(t) \leq  \textbf{v}( 6 \sqrt{2 p_D} + \sqrt{2t} ) + R (  12 p_D  + 12 \sqrt{p_D t} + t/3 )
% \]
% where $p_D$ is the entropy of ellipsoid with matrix $D^2$ and eigenvalues $\{ \lambda_i(D) \}_{i=1}^{\infty}$ 
% \[
% p_D =  \sqrt{\sum_{i=1}^{\infty} \frac{ \log^2 (\lambda_i^2(D)) }{\lambda_i^{2}(D)}  }
% \]
\end{theorem}
\begin{proof}
Set a random process for each $i$: 
\[
X_i(\gamma, \theta)  = \frac{1}{\rr} \gamma^T \{ \nabla\zeta_i(\theta) -  \nabla\zeta_i(\thetas) \}
\]
Such that
\[
 \sup_{\|D \gamma\| \leq \rr} \sum_i X_i(\gamma, \theta) = \| D^{-1} \{ \nabla\zeta(\theta) -  \nabla\zeta(\thetas) \} \|
\]
$\forall \, \text{fixed} \, (\gamma, \theta) \in \Omega(\rr, 0) \times \Omega(\rr, \thetas)$ and $\| u \| = 1$:
\[
\sup_u \E \sum_i (\nabla_{\theta} X_i(\gamma, \theta)^T D^{-1} u)^2 
= \sup_u  \E \sum_i \frac{1}{\rr} ( \gamma \nabla^2 \zeta (\theta)^T D^{-1} u)^2 
\]
\[
\leq \sup_u \E \sum_i ( u^T D^{-1} \nabla^2 \zeta (\theta)^T D^{-1} u)^2
\leq \textbf{v}^2 
\]Analogically
\[
\sup_u \E \sum_i (\nabla_{\gamma} X_i(\gamma, \theta)^T D^{-1} u)^2 \leq \textbf{v}^2 
\]
$\forall i \in {1, \ldots ,n} :$
\[
 \| D^{-1} \nabla X_i(\gamma, \theta) \| \leq R 
\]
Apply Lemma \ref{bern} for the sum of random variables $X (\gamma, \theta) =  \sum_i X_i(\gamma, \theta)$ when $(\gamma, \theta)$ are fixed.
\begin{EQA}
&& \log  \E \exp \lambda \left( X(\gamma_1, \theta_1) - X(\gamma_2, \theta_2) \right) \\
&& = \log  \E \exp \lambda \left( (\gamma_1 - \gamma_2)^T \nabla_{\gamma} X(\gamma, \theta) \right) + \log  \E \exp \lambda \left( (\theta_1 - \theta_2)^T  \nabla_{\theta} X(\gamma, \theta) \right)  \\
&& \leq \sup_u \log  \E \exp \lambda \left( \| D(\gamma_1 - \gamma_2) \| u^T D^{-1} \nabla_{\gamma} X(\gamma, \theta) \right) \\ 
&& + \sup_u \log  \E \exp \lambda \left( \| D (\theta_1 - \theta_2) \| u^T D^{-1}  \nabla_{\theta} X(\gamma, \theta) \right)  \\
&& \leq     \frac{\mathbf{v}^{2}\lambda^{2} \| D (\gamma_2 - \gamma_1) \|^2  }{2(1-R\lambda  \| D (\gamma_2 - \gamma_1) \|  )} + \frac{\mathbf{v}^{2}\lambda^{2} \| D(\theta_2 - \theta_1) \|^2 }{2(1-R\lambda \| D(\theta_2 - \theta_1) \|)} \\
&& \leq  \frac{\mathbf{v}^{2}\lambda^{2} d^2_{12} }{2(1-R\lambda d_{12})}
\end{EQA}
\[
d^2_{12} = \| D(\theta_2 - \theta_1) \|^2  + \| D( \gamma_2 - \gamma_1) \|^2 
\]Denote 
\[
\Upsilon = \Omega(\rr) \times \Omega(\rr)
\]
such that $\log N(\varepsilon, \Upsilon) = 2 \log N(\varepsilon, \Omega (\rr))$. Then with Lemma \ref{dudl} we obtain 
\[
\E \sup_{\gamma, \theta} X(\gamma, \theta) \leq  12 \mathbf{v}  \int_{0}^{\rr/ 2}\sqrt{\log N(\varepsilon, \Upsilon)}d\varepsilon
+ 12R  \int_{0}^{\rr/ 2}\log N(\varepsilon, \Upsilon)d\varepsilon
\] 
Application of Lemma \ref{buske} to the random variable $Z = \sup_{\gamma, \theta} X(\gamma, \theta)$ leads to the final result. Making variable replacement $\varepsilon = \rr \varepsilon'$ and dividing the last expression by $\rr$ we obtain
\[
\zz(t) \leq  E +  \sqrt{2t(\textbf{v}^{2}+2R E)}   +\frac{tR}{3}  
\]
where 
\[
E =  12 \sqrt{2} \mathbf{v}  H_1
+ 24R   H_2
\]
In order to simplify this inequality we have also used property $H_1^2 \leq H_2$. 
\end{proof}

\section{Barycenters model} \label{bc}

  We are going to show that \textbf{Assumptions 1,2,3} are fulfilled for the barycenters model (\ref{bc_model}) defined in Section \ref{main_res}. Also we need to estimate $\rombt$ from expression (\ref{romb}). It will allow us to apply Theorem \ref{devbound}.  Remind that we deal with the likelihood function $L(\theta) = L(\theta, \{\theta_i\}_{i=1}^n)$, where the  implicit random vectors $\{\theta_i\}_{i=1}^n$ is the dataset of Fourier coefficients corresponded to the  random measures~$\{ \mu_i\}_{i=1}^n$.

We start with derivatives study of the model and  show that Wasserstein distance in some differentiable Fourier basis $\{ \psi_i(x) \}_{i=1}^{\infty}$ is a support function (ref. Def(s) from Section \ref{supf}). Consider Wasserstein distance in the dual form (ref. definition in Section \ref{main_res}).  Let $G(x)$ be the Gram function of this basis. Decompose an arbitrary function $f(x)$ in this basis
\[
f(x)=\sum_{k}\eta_{k}(f)\psi_{k}(x)
\]
where
\[
\eta_{k}(f)=\langle f,\psi_{k}\rangle_{G}=\int f(x)\psi_{k}(x)G(x)dx
\]
Now we can rewrite the expectation difference from the dual Wasserstein definition as
\[
\E f(X)-\E f(Y)=\langle f,\frac{\varphi_{X}}{G}\rangle_{G}-\langle f,\frac{\varphi_{Y}}{G}\rangle_{G}=\langle\eta(f),\theta(\varphi_{X})\rangle-\langle\eta(f),\theta(\varphi_{Y})\rangle
\]
where 
\[
\theta_{k}(\varphi)=\int\varphi(x)\psi_{k}(x)dx
\]
Define positive symmetric matrices 
\[
K_x =  \left(\begin{matrix}
    \nabla^T \psi_1(x) \\
    \ldots \\
     \nabla^T \psi_k(x) \\
     \ldots
\end{matrix} \right) 
\left(\begin{matrix}
    \nabla \psi_1(x) & \ldots & \nabla \psi_k(x) & \ldots 
\end{matrix} \right) = (\nabla^T \psi(x)) (\nabla \psi^T(x)) 
\]
and  
\[
K \circ G = \int K_x G(x) dx 
\]
Each $K_{x}$ is positive, since $\eta^{T}K_{x}\eta=\|\nabla f(x)\|^{2}$.
Condition $\forall x: \|\nabla f(x)\|\leq 1$ is equivalent
in Fourier basis to  
\begin{equation} \label{ellips_intersect}
\eta \in  \bigcap \mathcal{E}_x =\left\{ \eta : \forall x: \left( \sum_{k}\eta_{k}\nabla\psi_{k}(x) \right)^{2}=\eta^{T} K_x \eta \leq 1 \right\} 
\end{equation}
An important remark is that 
\begin{equation}\label{ellips_set}
\bigcap \mathcal{E}_x  \subset \left\{ \eta : \eta^{T} (K \circ G) \eta \leq 1 \right\} 
\end{equation}
We have got a useful intermediate result.
\begin{lemma}\label{wasfou} 
Let random vectors $X$ and $Y$ have
densities $\varphi_{X}$ and $\varphi_{Y}$ with Fourier coefficients $\theta_X$ and $\theta_Y$, then the Wasserstein distance  is the support function of the convex set $\bigcap \mathcal{E}_x$  (\ref{ellips_intersect}), i.e.  
\[
W_1( \varphi_X,  \varphi_Y)= \max_{\eta \in \bigcap \mathcal{E}_x }\langle\eta,\theta_X -\theta_Y \rangle
\]
Moreover, for the regularised case defined in Section \ref{main_res} it holds
\[
\widetilde{W_1} (\varphi_X, \varphi_Y)= \max_{\eta \in \bigcap \mathcal{E}_x }\langle\eta,\theta_X - \theta_Y\rangle - \varepsilon \eta^T  K \circ G \eta
\]
\end{lemma}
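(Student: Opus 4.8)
The plan is to carry the dual (Kantorovich--Rubinstein) formulation of $W_1$ through the Fourier change of variables $f \leftrightarrow \eta$ term by term, and then observe that the resulting maximization over coefficient vectors is exactly the support-function maximization over $\bigcap \mathcal{E}_x$. The first step is to fix this correspondence: writing $f(x) = \sum_k \eta_k \psi_k(x)$ with $\eta_k = \langle f, \psi_k \rangle_G$, completeness of the basis $\{\psi_k\}$ makes $f \mapsto \eta$ a bijection between admissible test functions and their coefficient sequences, so that a supremum over $f$ can be rewritten as a supremum over $\eta$ ranging over the image of the admissible set.

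Next I would translate the objective and the constraint, both of which are essentially assembled from the identities established just above the lemma. For the objective, $\E f(X) = \int f(x)\varphi_X(x)\,dx = \sum_k \eta_k \int \psi_k(x)\varphi_X(x)\,dx = \langle \eta, \theta_X \rangle$, and likewise for $Y$, so the payoff $\E f(X) - \E f(Y)$ equals $\langle \eta, \theta_X - \theta_Y \rangle$. For the constraint, $\nabla f(x) = \sum_k \eta_k \nabla \psi_k(x)$ gives $\|\nabla f(x)\|^2 = \eta^T K_x \eta$, so the pointwise Lipschitz bound $\|\nabla f(x)\| \leq 1$ for all $x$ is precisely $\eta \in \bigcap \mathcal{E}_x$. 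Since this last equivalence runs in both directions, the image of the Lipschitz ball under $f \mapsto \eta$ is exactly $\bigcap \mathcal{E}_x$, and combining the two translations turns the dual $W_1$ into $\max_{\eta \in \bigcap \mathcal{E}_x} \langle \eta, \theta_X - \theta_Y \rangle$, which is the claimed support-function form.

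For the regularized case the only extra ingredient is the penalty, which I would rewrite as $\varepsilon \int \|\nabla f(x)\|^2 G(x)\,dx = \varepsilon \int \eta^T K_x \eta\, G(x)\,dx = \varepsilon\, \eta^T (K \circ G)\, \eta$, using the definition $K \circ G = \int K_x G(x)\,dx$ and interchanging summation and integration; adding this term to the objective yields the stated $\widetilde{W_1}$ formula. The main obstacle is not any single computation but the functional-analytic bookkeeping that makes the bijection legitimate: one must argue that every $\eta \in \bigcap \mathcal{E}_x$ genuinely corresponds to a well-defined $1$-Lipschitz function (so the correspondence is \emph{onto} the constraint set, not merely into it), and that the interchange of the infinite sum with the integral is justified. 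The underlying primal-to-dual passage itself is Kantorovich--Rubinstein duality, which is cited and may be taken as given.
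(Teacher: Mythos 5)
Your proposal follows essentially the same route as the paper: the paper establishes the lemma by exactly this term-by-term translation of the Kantorovich--Rubinstein dual through the Fourier expansion $f = \sum_k \eta_k \psi_k$, rewriting the payoff as $\langle \eta, \theta_X - \theta_Y\rangle$, the Lipschitz constraint as $\eta^T K_x \eta \le 1$ for all $x$, and the regularizer as $\varepsilon\, \eta^T (K \circ G)\, \eta$, with the lemma then stated as the summary of these identities. Your closing caveat about surjectivity of the map $f \mapsto \eta$ onto $\bigcap \mathcal{E}_x$ and the sum--integral interchange is a point the paper passes over silently, so flagging it is a mild strengthening rather than a divergence in method.
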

\noindent Remind that the barycenter's likelihood consists of independent components $l_i(\theta - \theta_i)$ with random vectors $\theta_i \in \R^{\infty}$ and parameter $\theta \in \R^{\infty}$,  such that   
\begin{align*}
l(\theta - \theta_i) & = \max_{\eta \in \bigcap \mathcal{E}_x }\langle\eta,\theta - \theta_i \rangle
- \varepsilon \eta^T  K \circ G \eta \\
& =  \max_{\eta}\langle\eta,\theta - \theta_i \rangle
- \varepsilon \eta^T  K \circ G \eta - \delta_{\bigcap \mathcal{E}_x}(\eta)
\end{align*}
Note that by definition  the dual function of $l$ is
\[
l^*(\eta)  = \delta_{\bigcap \mathcal{E}_x}(\eta) + \varepsilon \eta^T  K \circ G \eta 
\]
Consequently from Lemma \ref{ic_prop_1} follows that 
\[
l(\theta - \theta_i) = \delta^*_{\bigcap \mathcal{E}_x}(\theta - \theta_i) \oplus (\varepsilon \eta^T  K \circ G \eta)^*(\theta - \theta_i)  
\]
\begin{equation}
\label{l_conv}
=\left( \max_{\eta \in \bigcap \mathcal{E}_x }\langle\eta,\theta - \theta_i \rangle \right)
\oplus \frac{1}{\varepsilon} (\theta - \theta_i)^T  (K \circ G)^{-1} (\theta - \theta_i)
\end{equation}
Symbol $\oplus$ denotes infimal convolution (ref. definition in Section \ref{supf}).   Application of Theorem \ref{supdd},  taking into account property \ref{ellips_set}, provides the following bounds on the  derivatives of function $l$.
\begin{theorem} \label{wasd}  The gradient upper bound of the function $l$ is
  \[
\| D^{-1} \nabla l \| \leq \frac{1}{\lambda^{1/2}_{\min} \big( D  K\circ G D \big)}
\]
%   \[
% \| D^{-1} \nabla l^2 (\theta - \theta_i) \| \leq \frac{ 2 \| (K\circ G)^{-1/2} (\theta - \theta_i) \| }{\lambda^{1/2}_{\min} \big( D  K\circ G D\big)}
% \]
\end{theorem}

\begin{proof} Denote 
\[
\etas(\theta) = \argmax_{\eta \in \bigcap \mathcal{E}_x}  \eta^T \theta
\]
Use equation (\ref{l_conv}). By the consequence of Lemma \ref{infd} and Lemma \ref{supd} $\exists \theta_0$:
\[
\nabla l (\theta - \theta_i)  = \eta^*(\theta_0) 
\]
Since $\| (K\circ G)^{1/2} \etas \| \leq 1$, which follows from expression (\ref{ellips_set}),
\[
 \| D^{-1} \etas \|  = \| D^{-1} (K\circ G)^{-1/2} (K\circ G)^{1/2} \etas \|  \leq \| D^{-1} (K\circ G)^{-1/2} \|
\]
\end{proof}

\begin{theorem} \label{wasdd}
  Upper bounds of the second derivative of  function $l$ are
\[
\|  D^{-1} \partial \nabla^T l(\theta - \theta_i)  D^{-1} \| \leq 
 \frac{1}{\min_x \lambda_{\min}(D  K_x D) \| (K\circ G)^{-1/2} (\theta - \theta_i) \|} 
\]and
\[
\|  D^{-1} \partial \nabla^T l(\theta - \theta_i)  D^{-1} \| \leq 
 \frac{1}{ \varepsilon \lambda_{\min}(D  K \circ G D)  } 
\]
% \[
% \|  D^{-1} \partial \nabla^T l^2  D^{-1} \| \leq 
%  \frac{2}{\min_x \lambda_{\min}(D K_x D)} 
% \]
\end{theorem}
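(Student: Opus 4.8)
The plan is to obtain all three estimates by feeding the infimal‑convolution representation (\ref{l_conv}) of $l$ into the second‑derivative bounds of Theorem \ref{supdd} (equivalently its Consequence), exactly as the first‑derivative bounds of Theorem \ref{wasd} were read off from the corresponding gradient estimates. Write $l = s \oplus q$, where $s$ is the support function of the ellipsoid intersection $\bigcap_x \mathcal{E}_x$ and $q(z) = \varepsilon^{-1} z^{T}(K\circ G)^{-1} z$ is the conjugate of the quadratic regulariser; by the intersection‑support‑function lemma, $s$ is itself the infimal convolution of the single‑ellipsoid support functions $s_x(\theta) = (\theta^{T} K_x^{-1}\theta)^{1/2}$. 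Hence $l$ is one infimal convolution whose pieces are the $s_x$ together with $q$, and Theorem \ref{supdd} (via Lemma \ref{infdd} and the gradient identity from Lemma \ref{infd}) applies directly.

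First I would record the Hessians of the individual pieces. A direct computation gives $\nabla^{2} s_x(\theta) = s_x(\theta)^{-1}\bigl(K_x^{-1} - \nabla s_x \nabla s_x^{T}\bigr) \preceq s_x(\theta)^{-1} K_x^{-1}$, so that $\gamma^{T}\nabla^{2} s_x(\theta)\,\gamma \cdot s_x(\theta) \le \gamma^{T} K_x^{-1}\gamma$, while $\nabla^{2} q \equiv 2\varepsilon^{-1}(K\circ G)^{-1}$ is constant. Sandwiching between $D^{-1}$ amounts to testing against $\gamma = D^{-1}u$ with $\|u\|=1$, after which every term collapses onto one of the two operator‑norm identities $\|D^{-1} K_x^{-1} D^{-1}\| = \lambda_{\min}(D K_x D)^{-1}$ and $\|D^{-1}(K\circ G)^{-1} D^{-1}\| = \lambda_{\min}(D\, K\circ G\, D)^{-1}$ (using that $D$ is the symmetric square root of $D^{2}$, so $D^{-1}K_x^{-1}D^{-1}=(DK_xD)^{-1}$). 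The quadratic piece, whose value factor satisfies $q(z_q)/l(z)\le 1$ since $l(z)=s(z_1)+q(z_q)\ge q(z_q)$, yields the regulariser‑dominated second bound $\varepsilon^{-1}\lambda_{\min}(D\,K\circ G\,D)^{-1}$; equivalently this is the Löwner inequality $\nabla^{2} l \preceq \nabla^{2} q$ for the parallel combination of Hessians in an infimal convolution.

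For the $l^{2}$ estimate I would differentiate once more, $\partial\nabla^{T} l^{2} = 2\,\nabla l\,\nabla l^{T} + 2\,l\,\partial\nabla^{T} l$, and combine the gradient bound $\|D^{-1}\nabla l\| \le \lambda_{\min}^{1/2}(D\,K\circ G\,D)^{-1}$ from Theorem \ref{wasd} with the support‑piece Hessian bound of the previous paragraph. The latter, via the first inequality of Theorem \ref{supdd}, gives $\gamma^{T} D^{-1}\partial\nabla^{T} l\, D^{-1}\gamma \le \bigl(\min_x \lambda_{\min}(D K_x D)\,l(z)\bigr)^{-1}$, so the factor $l$ in $2\,l\,\partial\nabla^{T} l$ cancels the $l(z)^{-1}$ and leaves the clean constant $2\bigl(\min_x \lambda_{\min}(D K_x D)\bigr)^{-1}$; the residual $2\,\nabla l\,\nabla l^{T}$ term is absorbed because $\lambda_{\min}(D\,K\circ G\,D) \ge \min_x \lambda_{\min}(D K_x D)$, which follows from $K\circ G = \int K_x G(x)\,dx$ and concavity of $\lambda_{\min}$. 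The same support‑piece computation, now applied to $l$ itself, produces the first bound.

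The main obstacle is the bookkeeping of the value factors $s_x(z_x)$, $q(z_q)$ and $l(\theta-\theta_i)$ generated by the denominators in Theorem \ref{supdd}. For the second and third bounds these factors cancel or are trivially $\le 1$, but the first bound requires trading the denominator $l(\theta-\theta_i)$ against $\|(K\circ G)^{-1/2}(\theta-\theta_i)\|$, and here the containment $\bigcap_x \mathcal{E}_x \subset \{\eta : \eta^{T}(K\circ G)\eta \le 1\}$ together with the positive homogeneity of the support functions must be used in the correct direction. Pinning down this comparison of the support value at the optimal infimal‑convolution split to the ambient ellipsoid norm is the delicate step; once it is secured, the surrounding eigenvalue and operator‑norm manipulations around $D$ are routine.
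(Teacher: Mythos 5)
Your overall strategy coincides with the paper's: plug the infimal\-/convolution representation (\ref{l_conv}) into Theorem \ref{supdd} and Lemma \ref{infdd}, compute the single\-/ellipsoid Hessian (your $\nabla^{2}s_x \preceq s_x^{-1}K_x^{-1}$ is exactly the paper's bound on $\partial\eta^{*}/\partial\theta$ for $\eta^{*}(\theta)=K_x^{-1}\theta/\|K_x^{-1/2}\theta\|$), and obtain the second display by letting the weight in Lemma \ref{infdd} sit entirely on the quadratic piece, which is also how the paper argues (``follows directly from Lemma \ref{infdd} and equation (\ref{l_conv})''), modulo the factor $2$ coming from $\nabla^{2}q=2\varepsilon^{-1}(K\circ G)^{-1}$ that the paper silently drops as well. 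However, the step you flag as delicate in the first bound is a genuine gap, and it cannot be closed in the direction you hope: Theorem \ref{supdd} produces the value $s(\theta-\theta_i)$ (or $l(\theta-\theta_i)$) in the denominator, and passing to the stated denominator $\|(K\circ G)^{-1/2}(\theta-\theta_i)\|$ requires the lower bound $s(\theta-\theta_i)\geq \|(K\circ G)^{-1/2}(\theta-\theta_i)\|$, whereas the containment $\bigcap_x\mathcal{E}_x\subset\{\eta:\eta^{T}(K\circ G)\eta\leq 1\}$ yields precisely the reverse inequality for support functions, $s(\cdot)\leq\|(K\circ G)^{-1/2}\cdot\|$. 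Be aware that the paper's own proof performs this substitution in a single unexplained line, so your caution has located a soft spot of the paper itself rather than a trick you merely failed to find; as written, neither your plan nor the paper derives the first display from the stated ingredients.

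On the $l^{2}$ bound your route is correct in outline but weaker than the paper's by a factor and an extra hypothesis. The product rule plus your first\-/bound cancellation gives $2\|D^{-1}\nabla l\|^{2}+2/\min_x\lambda_{\min}(DK_xD)\leq 4/\min_x\lambda_{\min}(DK_xD)$, not the claimed $2/\min_x\lambda_{\min}(DK_xD)$: the gradient term is of the \emph{same} size as the Hessian term, so ``absorbing'' it doubles the constant. Moreover your absorption uses $\lambda_{\min}(D\,K\circ G\,D)\geq\min_x\lambda_{\min}(DK_xD)$, which via concavity of $\lambda_{\min}$ needs $\int G(x)\,dx\geq 1$ --- an assumption the paper never states or uses. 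The paper avoids both issues by applying the second inequality of Theorem \ref{supdd} to the support pieces and invoking the exact algebraic identity
\[
\eta^{*}(\theta)\,\eta^{*}(\theta)^{T}+\frac{\partial\eta^{*}(\theta)}{\partial\theta}\,\|K_x^{-1/2}\theta\| \;=\; K_x^{-1},
\]
so that the gradient outer product and the Hessian\-/times\-/value term combine \emph{exactly} into $K_x^{-1}$ with no doubling; this identity is the one ingredient your sketch is missing to reach the stated constant $2$.
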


\begin{remark} Matrix $K_x$ may be singular which makes the first bound non-informative. The second bound comes from the regulariser  $\varepsilon \eta^T  K \circ G \eta $ and has big coefficient $(1 / \varepsilon)$.  It is a weak part of the current theory and requires an improvement or probably an example which shows that this bound it tight.
\end{remark}

\begin{proof} 
Consider support function with one ellipsoid. 
\[
s_x (\theta) =  \max_{\eta^{T} K_x \eta\leq 1}\langle\eta,\theta\rangle=\|K_x^{-1/2}\theta\|
\]
Denote $\eta^{*}(\theta) = \argmax \langle\eta,\theta\rangle $, and account that $\eta^{T} K_x \eta\leq 1$.
\[
\eta^{*}(\theta)=\frac{K_x^{-1}\theta}{\|K_x^{-1/2}\theta\|}
\]
\[
\frac{\partial\eta^{*}(\theta)}{\partial\theta}=\frac{K_x^{-1}\theta^{T}K_x^{-1}\theta-K_x^{-1}\theta\theta^{T}K_x^{-1}}{\left(\theta^{T}K_x^{-1}\theta\right)^{3/2}}
\]
For some vector $\| \gamma \| = 1$ by means of property $ \| a \|^2 \| b \|^2 \geq (a^Tb)^2 $ 
\[
\gamma^TK_x^{-1} \gamma \theta^{T}K_x^{-1}\theta - \gamma^T K_x^{-1}\theta\theta^{T}K_x^{-1} \gamma \leq \| K_x^{-1} \| \theta^{T}K_x^{-1}\theta
\]
\[
\left\| \frac{\partial\eta^{*}(\theta)}{\partial\theta} \right \| \leq \frac{\|K_x^{-1} \|}{\left(\theta^{T}K_x^{-1}\theta\right)^{1/2}}
\]
Apply Theorem \ref{supdd} that gives the first bound
\[
\| D^{-1} \partial \nabla^T l(\theta - \theta_i) D^{-1} \| \leq \max_x \left \|D^{-1}\frac{\partial\eta_x^{*}(\thetas_x)}{\partial\theta} D^{-1} \right \| \frac{s_x(\thetas_x)}{s(\theta - \theta_i)}
\]\[
\leq \frac{\max_x \|D^{-1} K_x^{-1} D^{-1}\|}{ \| (K\circ G)^{-1/2} (\theta - \theta_i) \| }
\]The second bound for this norm follows directly from Lemma \ref{infdd} and equation (\ref{l_conv}). 

% Now consider the squared Wasserstein distance ($l^2$) which has a better derivative bound. From Theorem \ref{supdd} one gets
% \[
% \|  D^{-1} \partial \nabla^T l^2 D^{-1}  \| \leq 2 \max_x \left \| D^{-1} \eta^{*}(\thetas_x) \eta^{*}(\thetas_x)^T D^{-1} +   D^{-1} \frac{\partial\eta^{*}(\thetas_x)}{\partial\theta} \|K_x^{-1/2}\thetas_x\| D^{-1} \right \|
% \]
% Note that
% \[
% \frac{\partial\eta^{*}(\theta)}{\partial\theta} \|K_x^{-1/2}\theta\| = 
% K_x^{-1}  - \frac{(K_x^{-1} \theta)(K_x^{-1} \theta)^T}{ \|K_x^{-1/2}\theta\|^2 }
% \]
% \[
% \eta^{*}(\theta) \eta^{*}(\theta)^T +   \frac{\partial\eta^{*}(\theta)}{\partial\theta} \|K_x^{-1/2}\theta\| = K_x^{-1}
% \]
% Finally
% \[
% \|  D^{-1} \partial \nabla^T l^2 D^{-1}  \| \leq 2 \max_x \| D^{-1} K_x^{-1} D^{-1} \|
% \]

\end{proof}

%\begin{cons}
%Let $\psi_k = e^{i \langle k,x \rangle / T}$, $k\in\N^p$ then 
%\[
%K \circ G =  \left(\begin{matrix}
%    1/T^2 & \ldots & 0 \\
%    0 & \ldots & 0 \\
%     0  & \ldots k^2/T^2 \ldots & 0 \\
%     \ldots & \ldots & \ldots
%\end{matrix} \right)
%\]
%\end{cons}

\begin{remark} Wasserstein distance also may be differentiated directly. Paper \cite{LimitWas} contains corresponded lemma about directional derivative. For directions $h_1, h_2$ it holds 
\[
 W_1'(\mu_X, \mu_Y)(h_X, h_Y)  =  \max_{(u,v) \in \Phi(\mu_X, \mu_Y)} - (\langle u,h_X \rangle + \langle v,h_X \rangle) 
\]
where
\[
\Phi = \{ (u,v): \langle u, \mu_X \rangle + \langle v, \mu_Y \rangle = W_1(\mu_X, \mu_Y), \, \forall (x,y):
u(x) + v(y) \leq \| x - y \|  \}
\]
\end{remark}

\noindent Now we prove three essential properties of the barycenters model that corresponds to Assumptions 1,2,3 from Section \ref{slt}.  \\
\noindent \textbf{Property 1:} Let $\theta \in \localr$ and $\theta_0$, $\theta'_0$ be some vectors on the line between $\theta$ and $\thetas$. 
\begin{align*}
\delta(\rr) &= 
\frac{1}{\rr} \normp{ - D^{-1}  \{ \nabla \E L(\theta) -  \nabla \E L(\thetas) \} - D (\theta - \thetas) } \\
& \leq \| D^{-1} \{ \nabla^2 \E L(\theta_0) - \nabla^2 \E L(\thetas) \} D^{-1} \| \\
& \leq  \| D^{-1} \{ \nabla^3 \E L(\theta'_0) D^{-1} \} D^{-1} \| \, \rr
\end{align*}
Let $q(\theta_i)$ be  distribution of each $\theta_i$  then
\[
 \nabla^3 \E_i L(\theta - \theta_i) = \sum_{i=1}^n \int \nabla^3 l(\theta - \theta_i) q(\theta_i) d\theta_i = - \sum_{i=1}^n \int \nabla^2 l(\theta - \theta_i) \times \nabla q(\theta_i) d\theta_i 
\]
\[
 \| D^{-1} \{ \nabla^3 \E L(\theta) D^{-1} \} D^{-1} \| \leq \int \| D^{-1}  \nabla^2  L(\theta - \theta_x)  D^{-1} \|  \| D^{-1} \nabla q(\theta_x) \| d\theta_x
\]
And from Theorem \ref{wasdd} one gets 
\[
\|  D^{-1} \nabla^2 l(\theta - \theta_i) D^{-1} \| \leq \frac{1}{ \varepsilon \lambda_{\min} \big(D K \circ G D \big) }
\]
% \[
% \| D^{-1} \{ \nabla^2 \E L(\theta) - \nabla^2 \E L(\thetas) \} D^{-1} \| \leq  \frac{\rr n }{ \varepsilon \lambda_{\min} \big(D K \circ G D \big)} \int  \| D^{-1} \nabla q(\theta) \| d\theta  
% \]
Subsequently 
\begin{equation}
\label{bc_delta}
\delta(\rr) =  \frac{\rr n }{ \varepsilon \lambda_{\min} \big(D K \circ G D \big)} \int  \| D^{-1} \nabla q(\theta) \| d\theta  
\end{equation}
\noindent \textbf{Property 2:} From Theorem \ref{ass2cond} and Theorem \ref{wasdd} follows that one may set
\[
\textbf{v}^2  = \frac{  n }{ \varepsilon^2\lambda_{\min}^2 \big(D K \circ G D \big)} 
\]
and
\[
R = \frac{ 1 }{\varepsilon \lambda_{\min} \big(D K \circ G D \big)} 
\]
Then
\begin{align*}
\zz(t) 
& \leq    \textbf{v} (12 \sqrt{2} H_1 + \sqrt{2t}) + R (24 H_2 + 24 \sqrt{H_2 t} + t/3) \\
& \leq \frac{ \sqrt{n} (12 \sqrt{2} H_1 + \sqrt{2t}) + 24 H_2 + 24 \sqrt{H_2 t} + t/3  }{\varepsilon \lambda_{\min} \big(D K \circ G D \big)} 
\end{align*}
% where 
% \[
% E = 6 \mathbf{v} \sqrt{ 2 p_D} + 12 R \,   p_D 
% \]
% and $p_D$ is ellipsoid entropy with matrix $D$.

\noindent \textbf{Property 3:} Each model component $l(\theta - \theta_i)$ without regularisation is convex since 
\[
l(\lambda \theta_1 + (1 - \lambda) \theta_2 - \theta_i) =  l(\lambda (\theta_1 - \theta_i) + (1 - \lambda) (\theta_2 - \theta_i))
\]
\[
 = \max_{\eta \in \bigcap \mathcal{E}_x} \langle \eta, \lambda (\theta_1 - \theta_i) + (1 - \lambda) (\theta_2 - \theta_i)) \rangle 
\]
\[
\leq \max_{\eta \in \bigcap \mathcal{E}_x} \langle \eta, \lambda (\theta_1 - \theta_i) \rangle + \max_{\eta \in \bigcap \mathcal{E}_x} \langle \eta, (1 - \lambda) (\theta_1 - \theta_i) \rangle
\]
\[
 = \lambda l(\theta_1 - \theta_i) + (1 - \lambda) l(\theta_2 - \theta_i)  
\]
Note that the regularised $l$ is also convex as a composition of convex functions and the complete model $L$ is convex $(-\nabla^2 L > 0)$ as a positive aggregation of convex functions.  

Combination of these properties is used in the next proof  which gives the deviation bound of the parameter $\widehat{\theta}$. 

\noindent \textbf{Proof of Theorem \ref{wasF}}  Basing on Theorem \ref{devbound} and Properties 1,2,3 which correspond to Assumptions 1,2,3 from Section \ref{slt} we have
\[
\normp{ D (\opttheta - \theta^*) -  D^{-1} \nabla L (\thetas) } 
\leq \{\delta(\rr)  +  \zz(t)\} \rr = \rombt
\]
with probability $\hiprobt$. The values of $\delta(\rr)$ and $\zz(t)$ are estimated in Properties 1,2, where
\[
\delta(\rr) = \frac{ \rr n }{\varepsilon \lambda_{\min} \big(D K \circ G D \big)} \int  \| D^{-1} \nabla q(\theta) \| d\theta  
\]
\[
\zz(t) = \frac{ \sqrt{n} (12 \sqrt{2} H_1 + \sqrt{2t}) + 24 H_2 + 24 \sqrt{H_2 t} + t/3  }{\varepsilon \lambda_{\min} \big(D K \circ G D \big)} 
\]
Simplify the previous expressions. Account that according to the theorem conditions 
\[
\lambda_{\min} \big( D K\circ G D \big) \geq C_{D} n
\quad \text{and} \quad
  \int   \| D^{-1} \nabla q(\theta) \| d\theta  = \frac{C_q}{\sqrt{n}} 
\]
\[
\delta(\rr) = \frac{ \rr }{\varepsilon C_{D}} \frac{C_q}{\sqrt{n}} 
\]
\[
\zz(t) = \frac{ 12 \sqrt{2} H_1 + \sqrt{2t}   }{\varepsilon C_{D} \sqrt{n}} + 
O \left(\frac{H_2}{n}\right)
\]
Lemma \ref{wasd} provides bound $\forall \theta$
  \[
\| D^{-1} \nabla l(\theta) \| \leq \frac{1}{\lambda_{\min}^{1/2} \big( D  K\circ G D \big)}
\]
From this bound and Hoefding's inequality (Theorem 2.8 \cite{Concentration}) follows the bound for $\| D^{-1} \nabla L(\thetas) \|$. With probability $\hiprobt$ 
\[
\rr \leq 4 \|D^{-1} \nabla L(\thetas) \| \leq  \frac{ 8 \sqrt{n} (1 + \sqrt{2t}) }{\lambda_{\min}^{1/2} \big( D K\circ G D \big)}
\leq \frac{ 8  + 8 \sqrt{2t} }{ \sqrt{C_{D}} }
\]Finally taking into account definition (\ref{romb})
\[
\rombt \leq  \frac{ O(H_1 + t)   }{\varepsilon C^{3/2}_{D} \sqrt{n}} +   \frac{  O(t  C_q ) }{\varepsilon C^2_{D} \sqrt{n}}   + O \left(\frac{H_2}{n}\right)
\]

\noindent \textbf{Proof of Theorem \ref{was_gar}}  Bind Theorems \ref{wasF} and \ref{BESW}.  Form Theorem \ref{devbound} follows that the bound of Theorem \ref{wasF} also holds for projection of the parameter~$\theta$:
\[
\|  \breve{D} (\opttheta_p - \thetas_p) -  \breve{D}^{-1}\scorer L(\thetas) \| \leq \rombt 
\]
So  with probability $\hiprobt$
\[
W_1 ( \breve{D} (\opttheta_p - \thetas_p), \, Z) = \min_{\pi(\opttheta, Z)} \E \| \breve{D} (\opttheta_p - \thetas_p) -  Z \| 
\]\[
\leq W_1 (\breve{D}^{-1}\scorer L(\thetas), \, Z)   + \rombt
\]
Furthermore  from  Theorem \ref{BESW}  follows 
\[
W_1(\breve{D}^{-1}\scorer L(\thetas), Z) \leq \sqrt{2} \mu_3 \left ( \log(6 p \sqrt{ \tr \{ \Sigma \} } ) - \log (\mu_3) \right)
\]
where $\Sigma = \Var [\breve{D}^{-1}\scorer L(\thetas)]$ and setting $X_i =\breve{D}^{-1}\scorer l(\thetas - \theta_i) $ with independent copy $X_i'$
\begin{align*}
\mu_3 &=  \sum_{i=1}^n  \E X_i^T \Sigma^{-1} X_i  \| X_i - X'_i \| \\
& \leq 2  \max \| X_i \| \sum_{i=1}^n  \E X_i^T \Sigma^{-1} X_i    
\end{align*}
Using the next two properties
\[
\sum_{i=1}^n  \E X_i^T \Sigma X_i     =  \tr \left\{ \Sigma^{-1} \sum_{i=1}^n  \E   X_i X_i^T \right\} = p   
\]
\[
\max \| X_i \| = \| \breve{D}^{-1}\scorer l(\thetas - \theta_i)  \| \leq \| D^{-1}\nabla l(\thetas - \theta_i)  \| 
\leq \frac{1}{ \lambda^{1/2}_{\min} \big( D  K\circ G D \big) }
\]
we get 
\[
\mu_3 \leq \frac{2p}{\sqrt{C_{D} n}}
\]
and 
\[
W_1(\breve{D}^{-1}\scorer L(\thetas), Z) \leq O \left(  \frac{p \log(np) }{\sqrt{C_{D} n}} \right)
\]

\noindent Analogically one can derive a consequence from Theorems \ref{wasF} and \ref{BES}.
Let $C_A$ be the anti-concentration constant of the distribution $\P(\| Z \| > z)$, defined in Theorem \ref{BES}. Then  $\forall z \in \R$
\begin{EQA}
 &&| \P ( \| \breve{D} (\opttheta_p - \thetas_p) \| >  z )  -  \P ( \| Z \| > z ) |   \\
 && \leq | \P ( \| \breve{D}^{-1} \scorer  L(\thetas) \| >  z )  -  \P ( \| Z \| > z  ) | 
 + C_A \rombt
\end{EQA}
and 
\[
| \P ( \| \breve{D}^{-1} \scorer L(\thetas) \| > z  )  -  \P ( \| Z \| >  z  ) | \leq  C_A \mu_3 O( \log^2n )
\]
\noindent As for the anti-concentration constant it can be estimated as proposed in paper \cite{gotze2019} (Theorem 2.7), where was derived $C_A= O(1/\sqrt{p})$ for Euclidean norm. Grouping all together we get 
\[
| \P ( \| \breve{D} (\opttheta_p - \thetas_p) \| >  z )  -  \P ( \| Z \| > z ) |  
\leq  O \left( \frac{\sqrt{p} \log^2 n}{\sqrt{C_{D} n}} + \frac{ \rombt}{\sqrt{p}} \right)
\]
% \[
% \P(\| Z \|  \in [z, z + \Delta])  = \frac{1}{(2\pi \det \Sigma)^{1/2}} \int_{z < \| x \| < z + \Delta} e^{-x^T \Sigma^{-1} x/2} dx  = O\left( \frac{\Delta}{z}  \log(\tr\{\Sigma\}) \right)
% \]

\section{Support functions}

\noindent Bounds for the first and second derivatives of the likelihood of barycenters model (\ref{bc_model}) involves additional theory from convex analysis.  
\definition[*]{
Legendre--Fenchel transform of a function $f:X \to \overline{\R}$ or the convex conjugate function calls 
\[
f^*(y) = \sup_{x \in X} ( \langle x, y \rangle - f(x) )
\]
}
\definition[s]{ \label{supf}
Support function for a convex body $E$ is
\[
s_E(\theta) = \sup_{\eta \in E}  \theta^T \eta 
\]
}
\noindent Note that for the indicator function $\delta_E(\eta)$ of a convex set $E$ the conjugate function is support function of $E$, i.e.
\[
\delta^*_E(\theta) = s_E(\theta)
\]
\definition[$\oplus$]{ \label{inf_conv} Let $f_1, f_2:E \to \overline{\R}$ be convex functions. The infimal convolution of them is 
\[
(f_1 \oplus f_2)(x) = \inf_{x_1 + x_2 = x} (f_1(x_1) + f_2(x_2))
\]  
}
\begin{lemma} [Proposition 13.21 \cite{polo2}] \label{ic_prop_1}  Let $f_1, f_2:E \to \overline{\R}$ are convex lower-semi-continuous functions. Then
\[
(f_1 \oplus f_2)^* = f_1^* + f_2^* 
\]
\[
(f_1 + f_2)^* = \overline{f_1^* \oplus f_2^*}
\]
\end{lemma}
\begin{lemma} The support function of intersection $E = E_1 \cap E_2$ is infimal convolution of support functions for $E_1$ and $E_2$
\[
s_E(\theta) = \inf_{\theta_1 + \theta_2 = \theta} (s_{E1}(\theta_1) + s_{E2}(\theta_2))
\]
\end{lemma}
\begin{proof} According to the previous Lemma 
\[
\delta_{E_1 \cap E_2}(\eta) = \delta_{E_1}(\eta) + \delta_{E_2}(\eta),
\]
\[
(\delta_{E_1} + \delta_{E_2})^* = \overline{ \delta^*_{E_1} \oplus \delta^*_{E_2} }
\]
With additional property
\[
\text{intdom} \, \delta_{E_1} \cap \text{dom} \, \delta_{E_2} = \text{int} E_1 \cap E_2 \neq  \emptyset
\]
one have
\[
(\delta_{E_1} + \delta_{E_2})^* = \delta^*_{E_1} \oplus \delta^*_{E_2} 
\]
\end{proof}

\begin{lemma}\label{supd}  Let a support function $s_E(\theta)$ be differentiable, then its gradient belongs to the border of corresponded convex set $E$ 
\[
\nabla s_E(\theta) =  \etas(\theta)  \in \partial E
\]
where
\[
\etas(\theta) = \argmax_{\eta \in E} \eta^T \theta
\]
\end{lemma}
\begin{proof}
It follows from the convexity of $E$ and linearity of the optimization functional. 
\[
\frac{\partial \etas(\theta) }{\partial \| \theta \| }  = 0 \Rightarrow \frac{\partial \etas(\theta) }{\partial  \theta  }^T \theta = 0 
\]  
\[
\nabla s_E(\theta) = \frac{\partial \etas(\theta) }{\partial  \theta  }^T \theta  + \etas(\theta) = \etas(\theta)
\]

\begin{figure}
\begin{center}
\includegraphics[scale=0.4]{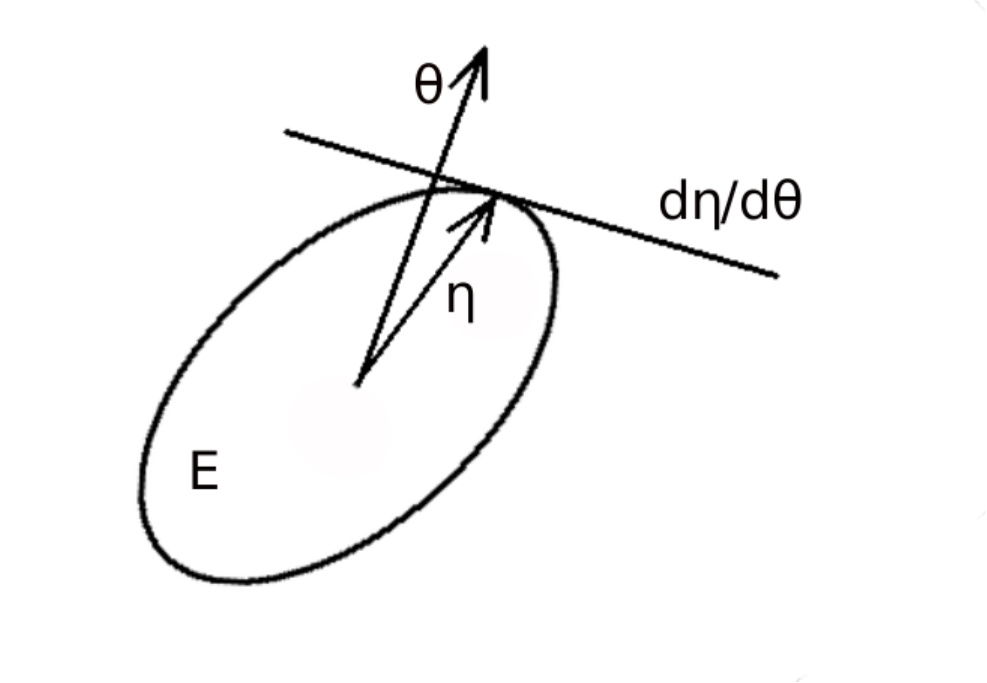}
\end{center}
\caption{Optimization related to support function.}
\end{figure}

\end{proof}

\begin{lemma}[Proposition 16.48 \cite{polo2}] \label{infd} 
   Let $f_1, f_2:E \to \overline{\R}$ be convex continuous functions. Then  the subdifferential of their infimal convolution can be computed by formula 
  \[
    \partial (f_1 \oplus f_2)(x) = \bigcup_{x = x_1 + x_2} \partial f(x_1) \cap \partial f(x_2)
  \]
\end{lemma}

\begin{corollary}
If in addition $f_1, f_2$ are differentiable, then their infimal convolution is differentiable and $\exists x_1, x_2: x = x_1 + x_2$ and 
\[
\nabla (f_1 \oplus f_2)(x) = \nabla f_1(x_1) = \nabla f_2(x_2)
\]
\end{corollary}

\begin{lemma}\label{infdd} Let $f_1,\ldots, f_m:E \to \overline{\R}$ be convex and two times differentiable functions. There is an upper bound for the second derivative of the infimal convolution $\forall t: \sum_{i=1}^m t_i = 1$
\[
\partial \nabla^T (f_1 \oplus \ldots \oplus f_m)(x) \preceq \sum_{i=1}^m t_i^2 \nabla^2 f(x_i)
\]
where $\sum_{i=1}^m x_i  = x$.
\end{lemma}

\begin{proof}
Use notation $f = f_1 \oplus \ldots \oplus f_m$. Let
\[
f(x) = \sum_i f_i(x_i)
\]
According to Lemma \ref{infd} if all the functions are differentiable then
\[
\nabla f(x) = \sum_i t_i \nabla f_i(x_i)
\]
From the definition $\oplus$ also follows that 
\[
f(x + z) \le \sum_i f_i(x_i + t_i z)
\]
Make Tailor expansion for the left and right parts and account equality of the first derivatives. 
\[
z^T \partial \nabla^T f(x + \theta z) z  
 \leq \sum_i  t_i^2 z^T \nabla^2 f_i(x_i + \theta_i z) z
\]
Since the direction $z$ was chosen arbitrarily, dividing both parts of the previous equation by $\|z\|^2 \to 0$, we come to inequality 
\[
  \partial \nabla^T f(x)   \preceq   \sum_i  t_i^2   \nabla^2 f_i(x_i)  
\]
\end{proof}

\begin{remark}
One can find another provement of the similar Theorem in book \cite{polo2} (Theorem 18.15).
\end{remark}

\begin{theorem}\label{supdd} Let $f_1,\ldots, f_m:E \to \overline{\R}$ be convex and two times differentiable functions. There is an upper bounds for infimal convolution $f = f_1 \oplus \ldots \oplus f_m$ derivatives $\forall \gamma$ $\exists x_1, \ldots, x_m$:
\[
 \gamma^T \partial \nabla^T f(x) \gamma  \leq \max_i \gamma^T \nabla^2 f_i(x_i) \gamma \frac{  f_i(x_i)  }{  f(x) }
\]
and
\[
\gamma^T \partial \nabla^T f^2(x)\gamma \leq 2 (\gamma^T \nabla f(x))^2  +  2  \max_i \gamma^T \nabla^2 f_i(x_i) \gamma  f_i(x_i) 
\]
\end{theorem}

\begin{proof}

Choosing appropriate $\{t_i\}$ in Lemma \ref{infdd} one get the required upper bounds. Set 
\[
t_i = \frac{f_i(x_i)}{\sum_{j=1}^m f_j (x_j) }
\]
and since  
\[
 \sum_{j=1}^m f_j(x_j) = f(x)
\]
\[
\sum_i  t_i^2  \gamma^T \nabla^2 f_i(x_i) \gamma  \leq \max_i t_i \gamma^T \nabla^2 f_i(x_i) \gamma = \max_i \gamma^T \nabla^2 f_i(x_i) \gamma f_i(x_i)   
\]
In order to prove the second formula apply this inequality in
\[
\partial \nabla^T f^2 = 2 \nabla f \nabla^T f + 2f \partial \nabla f
\]
\end{proof}

\begin{corollary} Let $s_1,\ldots, s_m: E^* \to \overline{\R}$ be support functions of the bounded convex smooth sets $E_1, \ldots, E_m$. There are upper bounds for the  derivatives of support function $s$ of intersection $E_1 \cap \ldots \cap E_m$, such that  $\forall i$
\[
\gamma^T \partial \nabla^T s(\theta) \gamma \leq \frac{ \max_i \gamma^T \partial \etas_i / \partial \theta_i  \gamma s_i (\theta_i)  }{  s ( \theta ) }
\]
\[
\gamma^T \partial \nabla^T s^2(\theta) \gamma \leq 2 (\gamma^T \etas_i)^2 +  2  \max_i \gamma^T \partial \etas_i / \partial \theta_i \gamma  s_i( \theta_i)
\]
\end{corollary}

\begin{proof}
It follows from Theorem \ref{supdd}  and Lemma \ref{supd}.
\end{proof}

% \section{Wasserstein distance as a support function}

% \input{wasdist}

\section{Gaussian approximation}
\label{gar}
Multivariate analogues of Berry--Esseen Theorem have many modifications depending on space dimension of random vectors and   functions set used for measures comparison. V. Bentkus in his papers \cite{Bentkus}, \cite{Bentkus2003} has presented excellent results related to this topic. Namely for a sequence of i.i.d random vectors with identity covariance matrix $\{X_i\}_{i=1}^n$, $X_i \in \mathcal{P}(\R^p)$, any convex set $A$ and Gaussian vector $Z \in \ND(0, I)$ it holds 
\[
 \left | \P \left ( \frac{1}{\sqrt{n}} \sum_{i=1}^n X_i \in A \right) - \P  \left(Z \in A \right)  \right | \leq \frac{400 p^{1/4} \E \| X_1 \|^3}{\sqrt{n}} 
\]and
\[
W_1\left (\frac{1}{\sqrt{n}} \sum_{i=1}^n X_i , Z \right)  \leq O \left( \frac{ \E \| X_1 \|^3}{\sqrt{n}} \right) 
\]
We extend these two statements for independent random vectors with non-identity covariance $\Sigma$. Additionally we remove factor  $p^{1/4}$  replacing it with anti-concentration constant defined below.  
\definition[$H_k$]{
The multivariate Hermite polynomial is
\[
H_k(x) = (-1)^{|k|} e^{x^T \Sigma^{-1} x /2} \frac{\partial^{|k|}}{\partial^{k_1} \ldots \partial^{k_p}} e^{-x^T \Sigma^{-1} x/2}
\]where $x \in \R^p$ and $|k| = k_1 + \ldots + k_p$.
}

\begin{lemma} \cite{stein_lemma} \label{stein_sol} Consider a Gaussian vector $Z \sim \ND(0, \Sigma)$ and two functions $h \in C^1$ and $f_h$ such that 
\[
f_h(x) = - \int_0^1 \frac{1}{2t} \E \overline{h}(Z(x,t))dt
\]where for $t \in [0, 1]$
\[
\overline{h}(Z(x,t)) = h(\sqrt{t}x + \sqrt{1-t} Z) - \E h(Z)
\]
Then $f_h$ is a solution of Stein's equation
\[
\overline{h}(x) = (\tr\{\nabla^2 \Sigma \} - x^T \nabla) f_h(x)
\]
and
\[
\frac{\partial^{|k|}}{\partial^{k_1} \ldots \partial^{k_p}} f_h(x) = 
- \int_0^1 \frac{1}{2} \frac{t^{ \frac{|k|}{2}  - 1}}{(1 - t)^{\frac{|k|}{2}}} \E H_k(Z) \overline{h}(Z(x,t)) dt
\]
\end{lemma}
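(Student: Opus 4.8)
The plan is to recognize this as the semigroup (interpolation) construction of a solution to the multivariate Stein equation, in which essentially all of the content is Gaussian integration by parts for the base vector $Z \sim \ND(0,\Sigma)$. Write $\mathcal{A} = \tr\{\Sigma \nabla^2\} - x^T \nabla$ for the Ornstein--Uhlenbeck operator on the right-hand side and set $v(x,t) = \E h(Z(x,t))$ with $Z(x,t) = \sqrt{t}\,x + \sqrt{1-t}\,Z$, so that $\E \overline{h}(Z(x,t)) = v(x,t) - \E h(Z)$. The endpoints are immediate: at $t=0$ the argument is $Z$ itself, giving $v(x,0)=\E h(Z)$, while at $t=1$ the Gaussian part disappears, giving $v(x,1)=h(x)$; hence $\E\overline{h}(Z(x,\cdot))$ runs from $0$ at $t=0$ to $\overline{h}(x)$ at $t=1$, which is what the later telescoping exploits.

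First I would establish the governing identity for $v$. Differentiating in $x$ gives $x^T \nabla_x v = \sqrt{t}\,\E[x^T \nabla h(Z(x,t))]$ and $\tr\{\Sigma \nabla_x^2 v\} = t\,\E[\tr\{\Sigma \nabla^2 h(Z(x,t))\}]$, while the $t$-derivative is
\[
\partial_t v = \frac{1}{2\sqrt{t}}\,\E[x^T \nabla h(Z(x,t))] - \frac{1}{2\sqrt{1-t}}\,\E[Z^T \nabla h(Z(x,t))].
\]
The one nontrivial step is to eliminate the explicit $Z$ in the last term: since $\partial_{Z_j} h(Z(x,t)) = \sqrt{1-t}\,(\partial_j h)(Z(x,t))$, Stein's identity $\E[Z\,g(Z)] = \Sigma\,\E[\nabla g(Z)]$ gives $\E[Z^T \nabla h(Z(x,t))] = \sqrt{1-t}\,\tr\{\Sigma\,\E[\nabla^2 h(Z(x,t))]\}$. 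Substituting and matching against the two $x$-derivative displays collapses everything to the compact relation $\mathcal{A} v = -2t\,\partial_t v$. With the normalizing weight $dt/(2t)$ this telescopes:
\[
\mathcal{A} f_h = -\int_0^1 (\mathcal{A} v)\,\frac{dt}{2t} = \int_0^1 \partial_t v \, dt = v(x,1) - v(x,0) = \overline{h}(x),
\]
which is the asserted Stein equation.

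For the derivative formula I would differentiate under the integral sign. Pushing a multi-index derivative $\partial_x^k$ through $Z(x,t)=\sqrt{t}\,x+\sqrt{1-t}\,Z$ produces the prefactor $t^{|k|/2}$ and lands all derivatives on $h$, i.e.\ $\partial_x^k v = t^{|k|/2}\,\E[(\partial^k h)(Z(x,t))]$. The key manoeuvre is Gaussian integration by parts applied $|k|$ times to move those derivatives off $h$ and onto the Gaussian density $\phi_\Sigma$ of $Z$: from $\partial_Z^k h(Z(x,t)) = (1-t)^{|k|/2}(\partial^k h)(Z(x,t))$ one writes $\E[(\partial^k h)(Z(x,t))] = (1-t)^{-|k|/2}\E[\partial_Z^k h(Z(x,t))]$, and integrating by parts against $\phi_\Sigma$ produces the factor $(-1)^{|k|}\partial^k \phi_\Sigma/\phi_\Sigma$, which by the definition of $H_k$ above equals $H_k(Z)$. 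Hence $\partial_x^k v = \frac{t^{|k|/2}}{(1-t)^{|k|/2}}\,\E[H_k(Z)\,h(Z(x,t))]$, and since $\E H_k(Z) = \int (-1)^{|k|}\partial^k \phi_\Sigma = 0$ for $|k|\ge 1$ one may replace $h$ by $\overline{h}$ at no cost. Differentiating $f_h$ under the integral and inserting this expression yields the stated formula, the constant $\tfrac12$ and the surviving powers of $t$ and $1-t$ coming from combining the weight $dt/(2t)$ with the prefactors.

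The main obstacle is analytic rather than algebraic: justifying differentiation under the integral and the repeated Gaussian integration by parts, and above all the integrability of the kernels $t^{|k|/2-1}(1-t)^{-|k|/2}$ near $t=0$ and $t=1$. The Hermite representation is exactly what buys this, since it shifts every derivative onto the smooth density so that only $h$ itself appears, allowing the argument to run for merely bounded or Lipschitz $h$; but one must still verify that the endpoint singularities are integrable for each order $|k|$ and that dominated convergence applies. I would handle this by splitting the $t$-integral near the endpoints and bounding $\E[H_k(Z)\,\overline{h}(Z(x,t))]$ through Cauchy--Schwarz together with the moment bounds on $H_k(Z)$, which is the step I expect to demand the most care.
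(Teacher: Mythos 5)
Your argument is correct, and it is worth noting that the paper gives \emph{no proof of this lemma at all}: it is stated bare and used immediately in the Consequence and the subsequent Berry--Esseen computations. So there is no authorial proof to compare against; your route --- the identity $\mathcal{A}v=-2t\,\partial_t v$ for $v(x,t)=\E h(Z(x,t))$ with $\mathcal{A}=\tr\{\Sigma\nabla^2\}-x^T\nabla$, obtained from Stein's identity $\E[Zg(Z)]=\Sigma\,\E\nabla g(Z)$, then telescoping against the weight $dt/(2t)$, then Gaussian integration by parts to produce $H_k$ --- is the standard Ornstein--Uhlenbeck/Mehler construction, and each step checks out.

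Two discrepancies with the printed statement deserve explicit flagging rather than silent repair. First, as printed, $f_h(x)=-\int_0^1\E\overline{h}(Z(x,t))\,dt$ does \emph{not} solve the Stein equation: with plain $dt$ one gets $\mathcal{A}f_h=\int_0^1 2t\,\partial_t v\,dt\neq\overline{h}(x)$. Your weight $dt/(2t)$ is necessary (it is exactly the substitution $t=e^{-2s}$ in $-\int_0^\infty P_s\overline{h}\,ds$), and it is what the paper must intend --- but you used it without noting that the displayed definition lacks it. Second, your own computation combines the prefactor $t^{|k|/2}(1-t)^{-|k|/2}$ with $dt/(2t)$ and therefore yields
\[
\frac{\partial^{|k|}f_h(x)}{\partial^{k_1}\ldots\partial^{k_p}}
= -\int_0^1 \frac{1}{2}\,\frac{t^{|k|/2-1}}{(1-t)^{|k|/2}}\,\E\, H_k(Z)\,\overline{h}(Z(x,t))\,dt,
\]
with denominator exponent $|k|/2$, not the printed $|k|/2-1$; you assert this "yields the stated formula", which it does not --- it yields this corrected one. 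The correction is vindicated by the paper's own Consequence: for $|k|=2$ the kernel there is $\frac{1}{2(1-t)}=\frac12 t^{0}(1-t)^{-1}$, matching your derivation, whereas the printed exponent would give the constant $\frac12$. One further minor point: as written, your integration-by-parts step first forms $\partial^k h$ and then moves derivatives onto the density, which presupposes smooth $h$; for merely Lipschitz $h$ one should differentiate the convolution form $v(x,t)=\int h(u)\,\phi_\Sigma\bigl((u-\sqrt t\,x)/\sqrt{1-t}\bigr)(1-t)^{-p/2}\,du$ directly in $x$, which lands all derivatives on $\phi_\Sigma$ from the start and gives the same $H_k$ factor. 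Your endpoint analysis is the right one: near $t=0$, Lipschitz continuity gives $\E\overline{h}(Z(x,t))=O(\sqrt t)$ so the $1/(2t)$ weight is harmless, and near $t=1$ one exploits $\E H_k(Z)=0$ to subtract $h(\sqrt t\,x)$ and gain a factor $\sqrt{1-t}$ --- exactly the splitting at $1-\alpha$ the paper performs later.
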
 

\begin{proof} One may verify this statement through substituting the solution $f_h$ into Stein's equation. It has been done in Lemma 1 of paper \cite{stein_lemma}. 

\end{proof}

\noindent In the following discussion we will need difference between the second derivatives of function $f_h$.
\begin{corollary}
 Let $f_h$ be the solution of Stein's equation, then
\[
\nabla^2 f_h(x) - \nabla^2 f_h(y) = 
- \int_0^1  \frac{1}{2(1 - t)} \E H_2(Z) \{ h(Z(x,t)) - h(Z(y,t)) \}  dt
\]
where 
% \[
% H_2(Z) = (\Sigma^{-1}Z) (\Sigma^{-1}Z)^T  - \Sigma^{-1}
% \]
\[
H_2(Z) =  \Sigma^{-1/2} \{ (\Sigma^{-1/2}Z) (\Sigma^{-1/2}Z)^T  - I\} \Sigma^{-1/2}
\]
\end{corollary}

\noindent Use notation $\forall i$ $X_{-i}$ for sum of $\{ X_j \}_{j=1}^n$ without the $i$-th element and $X'_i$  for an \textit{independent copy} of $X_i$.  Use  the following notation for conditional expectation 
\[
\E_{-i} =  \E (\cdot | X_i, X'_i)
\]
\[
X_{-i} = \sum_{j = 1, j \neq i}^n X_j
\]

\begin{lemma} \label{general_gar}  Consider a Gaussian vector $Z \in \ND(0, \Sigma)$ and a sequence of independent zero-mean random vectors $X = \sum_{i=1}^n X_i$ in $\R^{p}$ with the same non-singular variance matrix 
\[
\E X X^T  =  \Sigma
\]Then for any function with bounded the first derivative $h \in C^1(\R^p)$
\[
\E h(X) - \E h(Z) \leq   A  \log \left(\frac{6B}{A} \right) 
\]
where 
\[
A = \sum_{i=1}^n \E X_i^T \Sigma^{-1} X_i \, A_i,
\quad
B = \sum_{i=1}^n \E X_i^T \Sigma^{-1} X_i \, B_i
\]
and $\forall \alpha > 0$ on interval $t \in [0, 1-\alpha]$
\[
 A_i \geq \frac{1}{\sqrt{t}} \sup_{\| \gamma \| = 1, \,\, \theta \in [0, 1]} \E_{-i} J_t(\gamma, \theta, X_i, X_i')
\]and on interval $t \in [1-\alpha, 1]$ for the same $\alpha$
\[
 B_i \geq \frac{1}{2 \sqrt{1-t}} \sup_{\| \gamma \| = 1, \, \, \theta \in [0, 1]} \E_{-i} J_t(\gamma, \theta, X_i, X_i')
\]and
\[
J_t(\gamma, \theta, X_i, X_i') = \{ (\gamma^T \Sigma^{-1/2} Z)^2  - 1 \} \{ h(Z(X_{-i} + \theta X_i,t)) - h(Z(X_{-i} + X'_i,t)) \}
\]where
\[
Z(x,t) = \sqrt{t}x + \sqrt{1-t} Z
\]
\end{lemma}

\begin{proof}

\noindent  From Lemma \ref{stein_sol} follows that for any function $h$ with the first bounded derivative 
\[
\E h(X) - \E h(Z) =   \E \tr\{\nabla^2 \Sigma \} f_h(X) - \E \sum_{i=1}^n X_i^T \nabla  f_h(X)
\]
Let $\theta$ be some value in $[0, 1]$. Decompose  $\nabla  f_h(X)$ by Taylor formula 
\[
\nabla  f_h(X) = \nabla  f_h(X_{-i}) + \nabla^2  f_h(X_{-i} + \theta X_i) X_i
\]Note that 
\[
\E \tr\{\nabla^2 \Sigma \} f_h(X) =  \E \sum_{i=1}^n X_i^T \nabla^2  f_h(X_{-i} + X_i') X_i
\]Substitute them into the first expression 
\[ 
 \E h(X) - \E h(Z) = \sum_{i=1}^n \E X_i^T \left \{  \nabla^2 f_h(X_{-i} + X'_i) -  \nabla^2 f_h(X_{-i} + \theta X_i  ) \right \} X_i  
\]
\[ 
 = \sum_{i=1}^n \E (\Sigma^{-1/2}X_i)^T \Sigma^{1/2} \left \{ \nabla^2 f_h(X_{-i} + X'_i) - \nabla^2 f_h(X_{-i} + \theta X_i  ) \right \} \Sigma^{1/2} \Sigma^{-1/2} X_i  
\]
From the consequence of Lemma \ref{stein_sol} use equality for the second derivative difference. For a unit vector $\| \gamma \| = 1$ and conditional expectation $\E_{-i} =  \E (\cdot | X_i, X'_i)$
\[
\gamma^T \E_{-i} \Sigma^{1/2} \left \{  \nabla^2 f_h(X_{-i}+ X'_i) - \nabla^2 f_h(X_{-i} + \theta X_i ) \right \} \Sigma^{1/2} \gamma
\]
\[
 =  \int_0^1  \frac{1}{2(1 - t)} \E_{-i} \{ ((\Sigma^{-1/2}Z)^T \gamma)^2  - 1 \} \{ h(Z(X_{-i} + \theta X_i,t)) - h(Z(X_{-i} + X'_i,t)) \}  dt
\]
\[
\leq \int_0^{1-\alpha} \frac{t^{1/2}}{2(1 - t)} A_i dt
+  \int_{1-\alpha}^1 \frac{1}{(1 - t)^{1/2}} B_i dt
\]
\[
\leq - \frac{A_i}{2} \log(\alpha) + 2B_i \sqrt{\alpha}	
\]Sum it with $X^T_i \Sigma^{-1} X_i$ and finally we obtain
\begin{align*}
 \E h(X) - \E h(Z) & \leq - \frac{A}{2} \log(\alpha)  + 2B \sqrt{\alpha}	\\
& \leq A \left(1 + \log \left(\frac{2B}{A} \right) \right) \\
& \leq A  \log \left(\frac{6B}{A} \right)
\end{align*}

\end{proof}

\begin{theorem}[Multivariate Berry--Esseen  with Wasserstein distance] \label{BESW}
 Consider a sequence of independent zero-mean random vectors $X = \sum_{i=1}^n X_i$ in $\R^{p}$ with a variance matrix 
\[
\E X X^T  =  \Sigma
\]
Then 1-Wasserstein distance between $X$ and Gaussian vector $Z \in \ND(0, \Sigma)$ has the following upper bound 
\[
W_1(X, Z) \leq \sqrt{2} \mu_3 \log\left(6 p \sqrt{ \tr ( \Sigma ) }  / \mu_3 \right) 
\]
where
\[
\mu_3 =  \sum_{i=1}^n  \E   X_i^T \Sigma^{-1} X_i  \| X_i - X'_i \|
\]
and each $X_i'$ is an independent copy of $X_i$.
\end{theorem}

\begin{remark}
In i.i.d case with $\Sigma = I_p$ 
\[
W_1(X, Z)  = O \left( \frac{p^{3/2} \log(n)}{\sqrt{n}} \right) 
\]
These is the same theorem with a different proof in paper \cite{Bentkus2003}.  
\end{remark}

\begin{proof} Basing on Lemma \ref{general_gar} we consider $h$ with  property  $\|  \nabla h( \cdot ) \| \leq 1$ and involve definition of $A_i$ and $B_i$. This property comes from the dual definition of $W_1$, ref. Section \ref{main_res}. We decompose $A_i$ extracting $\sqrt{t}(X_i - X'_i)$ and decompose $B_i$ extracting $\sqrt{1-t} Z$.  

\[
A_i = \| X_i - X'_i \| \, \E_{-i} | ( \gamma^T \Sigma^{-1/2}Z)^2  - 1 | 
\]
\[
B_i = \E_{-i} | ( \gamma^T \Sigma^{-1/2}Z)^2  - 1 | \, \| Z \| 
\]
Note that $( \gamma^T \Sigma^{-1/2}Z)^2$ has chi-square distribution $\chi_1$. And its variance equals $2$. So we obtain by means of Cauchy--Bunyakovsky inequality 
\[
 \E_{-i} | ( \gamma^T \Sigma^{-1/2}Z)^2  - 1 |  \leq \sqrt{2}
\]and 
\begin{align*}
\E_{-i} | ( \gamma^T \Sigma^{-1/2}Z)^2  - 1 | \, \| Z \| 
 \leq \sqrt{ \E | ( \gamma^T \Sigma^{-1/2}Z)^2  - 1 |^2} \sqrt{ \E \| Z \|^2 }  = \sqrt{2 \tr (\Sigma)}
\end{align*}Subsequently 
\[
A \leq \sqrt{2}   \sum_{i=1}^n  \E   X_i^T \Sigma^{-1} X_i  \| X_i - X'_i \| =  \sqrt{2} \mu_3
\]
and 
\[
B \leq  \sqrt{2 \tr (\Sigma)}   \sum_{i=1}^n  \E   X_i^T \Sigma^{-1} X_i  = p \sqrt{2 \tr (\Sigma)}
\]

\end{proof}

\noindent For the next result we will need the following technical lemma. 
\begin{lemma} \label{subg_exp}
Let a random variable $\varepsilon$ has a tail bound $\forall \xx \geq \xx_0$ 
\[
\P(\varepsilon > h(\xx)) \leq e^{-\xx}
\]
Then for a function $g: \R_+ \to \R_+$ with derivative $g': \R_+ \to \R_+$
\[
\E \Ind[\varepsilon > h(\xx_0)] g(\varepsilon) \leq g(h(\xx_0)) e^{-\xx_0} + \int_{\xx_0}^{\infty} e^{-\xx} g'(h(\xx)) h'(\xx) d\xx
\] 
In particular
\[
\E \Ind[\varepsilon > h(\xx_0)] \varepsilon \leq h(\xx_0) e^{-\xx_0} + \int_{\xx_0}^{\infty} e^{-\xx} h'(\xx) d\xx
\] 
\[
\E \Ind[\varepsilon > h(\xx_0)] \varepsilon^r \leq h(\xx_0)^r e^{-\xx_0} + r \int_{\xx_0}^{\infty} e^{-\xx} h(\xx)^{r-1} h'(\xx) d\xx
\] 
\end{lemma}

\begin{theorem}[Multivariate Berry--Esseen] \label{BES}
 Consider a sequence of independent zero-mean random vectors $X = \sum_{i=1}^n X_i$ in $\R^{p}$ with  a variance matrix 
\[
\E X X^T  =  \Sigma
\]
Let $\varphi: \R^{p} \to \R_{+}$ be some norm function (sub-additive and homogeneous)
and  with Gaussian vector $Z \in \ND(0, \Sigma)$ fulfills \textbf{anti-concentration} property, such that $\forall x \in \R_{+}$
\[
\P (\varphi(Z) > x) - \P (\varphi(Z) > x + \Delta) \leq C_A \Delta 
\] 
Then the measure difference between $X$ and Gaussian vector $Z$ has the following upper bound~$\forall x$
\begin{align*}
\left | \P (\varphi(X) > x) -  \P (\varphi(Z) > x) \right |
& \leq 22 C_A \mu_3 
\log \left( \frac{3 p}{C_A \mu_3} \right) \log \left( \frac{\sqrt{2 \E \varphi^2(Z)}  p}{ 10 C_A \mu^2_3} \right) \\
& \leq C_A \mu_3 \, O(\log^2 n)
\end{align*}
where
\[
\mu_3 =  \sum_{i=1}^n  \E  X_i^T \Sigma^{-1} X_i  2 \varphi( X_i )
\]

\end{theorem}

\begin{proof}

\noindent Make some preliminary computations. Define a smooth indicator function 
\[
g_{x,\Delta}(t) = \begin{cases}
               0, & t < x - \Delta \\
               (x - t)/\Delta, & t \in [x-\Delta, x] \\
               1, & t > x
            \end{cases} 
\]
Set $h = g_{x, \Delta} \circ \varphi$.
Denote the required bound by $\delta$:
\[
\sup_{x \in \R_+} \left | \P (\varphi(X) > x) -  \P (\varphi(Z) > x) \right | \leq \delta
\]
Note that from sub-additive property of the function $\varphi$ follows
\[
g_{x, \Delta} \big ( \varphi(X + dX) \big) \leq g_{x, \Delta}  \big( \varphi(X) + \varphi( d X) \big)
\]
and
\[
g'_{x, \Delta}(t) = \frac{1}{\Delta} \Ind [ x -\Delta \leq t \leq x ] 
\] 
By the anti-concentration property 
\[
\E g'_{x, \Delta}(\varphi(Z)) = \frac{1}{\Delta} \big( \P (\varphi(Z) > x - \Delta) - \P (\varphi(Z) > x) \big) \leq C_A  
\]And using definition of $\delta$
\begin{align}
\E g'_{x, \Delta}(\varphi(Z(X,t))) 
& \leq \frac{1}{\Delta} \big( \P (\varphi(Z) > x-\Delta) - \P (\varphi(Z) > x) \big) + \frac{2\delta}{\Delta} \\
\label{g_grad_b} & \leq C_A + \frac{2\delta}{\Delta}  
\end{align}
% Assume $X_i'$ is an independent copy of $X_i$ and $\theta$ is some value in $[0, 1]$ and
% \[
% \E \overline{h}(X) =   \E \tr\{\nabla^2 \Sigma \} f_h(X) - \E \sum_{i=1}^n X_i^T \nabla  f_h(X)
% \]
% \[ 
%  = \sum_{i=1}^n \E X_i^T \left \{ \nabla^2 f_h(X_{-i} + \theta X_i  ) - \nabla^2 f_h(X_{-i} + X'_i) \right \} X_i  
% \]
% \[ 
%  = \sum_{i=1}^n \E (\Sigma^{-1/2}X_i)^T \Sigma^{1/2} \left \{ \nabla^2 f_h(X_{-i} + \theta X_i) - \nabla^2 f_h(X_{-i} + X'_i) \right \} \Sigma^{1/2} \Sigma^{-1/2} X_i  
% \]
% According to the consequence of Lemma \ref{stein_sol} one has to bound the following expression 
Now we will bound $\E_{-i} J_t(\gamma, \theta, X_i, X_i')$ required in Lemma \ref{general_gar}. Remind that by definition 
\[
J_t(\gamma, \theta, X_i, X_i') = \{ (\gamma^T \Sigma^{-1/2} Z)^2  - 1 \} \{ h(Z(X_{-i} + \theta X_i,t)) - h(Z(X_{-i} + X'_i,t)) \}
\]
where 
\[
Z(x, t) = \sqrt{t} x + \sqrt{1-t} Z
\]
For some $\theta' \in [0, 1]$ using sub-additivity of $\varphi$ and Taylor formula we get
\begin{align*}
&  h(Z(X_{-i} + \theta X_i,t))  - h(Z(X_{-i} + X'_i,t)) \\
& \leq g_{x, \Delta}  \big( \varphi(Z( X_{-i} + X'_i ,t)) + \varphi( \sqrt{t} (\theta X_i - X'_i )) \big) -  g_{x, \Delta}  \big(  \varphi(Z(X_{-i} + X'_i,t) \big)  \\
& \leq g'_{x, \Delta} \big( \varphi(Z(X_{-i} + X'_i,t)) + \theta' \varphi( \sqrt{t} (\theta X_i - X'_i )) \big)  \varphi( \sqrt{t} (\theta X_i - X'_i ) )  
\end{align*}Together with  (\ref{g_grad_b})
\[
 \E_{-i} h(Z(X_{-i} + \theta X_i,t))  - \E_{-i} h(Z(X_{-i} + X'_i,t)) \leq \sqrt{t} \left ( C_A + \frac{2\delta}{\Delta} \right ) \varphi(\theta X_i - X'_i )  
\]
Analogically one can obtain the same inequality for the opposite sign and subsequently we get inequality with module
\[
\left |  \E_{-i} h(Z(X_{-i} + X'_i,t)) - \E_{-i} h(Z(X' + \theta X_i,t)) \right | 
\leq \sqrt{t} \left ( C_A + \frac{2\delta}{\Delta} \right ) \varphi( \theta X_i - X'_i )
\]
Using  the previous expression and notation 
\[\varepsilon^2 = (\gamma^T \Sigma^{-1/2}Z)^2 \sim \ND^2(0, 1)\]
estimate the upper bound of $J_t$
\begin{align*}
\frac{1}{\sqrt{t}} \E_{-i}  J_t(\gamma, \theta, X_i, X_i')  
 \leq | \tau - 1 | \left ( C_A + \frac{2\delta}{\Delta} \right ) \varphi( \theta X_i - X'_i ) +  \E \Ind[\varepsilon^2 > \tau] |\varepsilon^2 - 1|
\end{align*}
\noindent For $\varepsilon \sim \ND(0, 1)$ we have
\[
\P(\varepsilon > \sqrt{2\xx}) \leq e^{-\xx}
\quad \text{and} \quad
\P(\varepsilon^2 > 2\xx + 2 \log2) \leq  e^{-\xx}
\]
and by means of  Lemma \ref{subg_exp} we get for all $\tau \geq 1$
\[
\E \Ind[\varepsilon^2 > \tau] |\varepsilon^2 - 1|  \leq 2 (\tau + 1) e^{-\tau / 2}
\]and 
%Setting $\tau  = - 2 \log ( \left ( C_A + \frac{2\delta}{\Delta} \right ) \varphi( X_i - X'_i ) / 2 )$
\begin{align*}
A_i &= \sup_{\| \gamma \| = 1,\, \theta \in [0,1]} \frac{1}{\sqrt{t}} \E_{-i} J_t(\gamma, \theta, X_i, X_i') \\   
& \leq |\tau - 1| \left ( C_A + \frac{2 \delta}{\Delta} \right ) \varphi(\theta X_i - X'_i ) +   2 (\tau + 1) e^{-\tau / 2} 
\end{align*}
One should find optimal values for the arbitrary parameters $\Delta > 0$ and $\tau \geq 1$.  Setting $ \Delta = \delta / (2 C_A )$  and  $\tau = 2 \log ( 3 p / (C_A \mu_3) )$ we obtain that
\begin{align*}
A & = \sum_{i=1}^n \E X_i^T \Sigma^{-1} X_i \, A_i \\
 & \leq 5 |\tau - 1|  C_A  \mu_3 +   2 (\tau + 1) e^{-\tau / 2} p \\
& \leq 11 C_A \mu_3 \log \left( \frac{3 p}{C_A \mu_3} \right)
\end{align*}
We need also the other upper bound for $B_i$ when $t$ is close to $1$. 
\begin{align*}
& \E_{-i} \{ \varepsilon^2  - 1 \}   h(Z(X_{-i} + X'_i,t)) \\ 
&  = \E_{-i} \{\varepsilon^2  - 1 \}  \{ h( \sqrt{t} (X_{-i} + X'_i) +  \sqrt{1 - t} Z  ) - h( \sqrt{t} (X_{-i} + X'_i)) \}  \\
&  \leq \sqrt{1-t} \E_{-i} | \varepsilon^2  - 1 | \, | g'_{x,\Delta} | \, \varphi(Z) \\  
& \leq \frac{\sqrt{1-t}}{\Delta} \sqrt{2 \E \varphi^2(Z)}    
\end{align*}In the last expression we have applied Cauchy--Bunyakovsky inequality and the upper bound for $| g'_{x,\Delta} | \leq 1/ \Delta$.
% From the proof of Theorem  \ref{BESW} follows 
% \[
% | \E h(X) - \E h(Z)  |
% \leq - \frac{A}{2} \log(\alpha) + 2B \sqrt{\alpha}	
% \]
Accounting condition $ \Delta = \delta / (2 C_A )$ one may derive that 
\[
B_i =  \frac{2 C_A }{\delta} \sqrt{2 \E \varphi^2(Z)}
\]and furthermore 
\[
B  = \sum_{i=1}^n \E X_i^T \Sigma^{-1} X_i \, B_i \leq  \frac{2 C_A }{\delta} \sqrt{2 \E \varphi^2(Z)} \, p
\]
In order to make step from $h$ expectation difference to the probabilities difference we will use the next inequality:
\[
\P (\varphi(X) > x) \leq \E h(X) = \E h (Z) +  \E h(X) - \E h (Z)   
\]
\[
\leq  \P (\varphi(Z) > x - \Delta) +   \E h(X) - \E h (Z)   
\]
\[
\leq  \P (\varphi(Z) > x) +   \E h(X) - \E h (Z) + C_A \Delta   
\]
Which gives
\[
\delta \leq | \E h(X) - \E h (Z)| + C_A \Delta   \leq  | \E h(X) - \E h (Z)| + \frac{\delta}{2}
\]\[
\delta \leq 2 | \E h(X) - \E h (Z)| 
\]
Basing on Lemma \ref{general_gar} we consider $h = g_{x, \Delta} \circ \varphi$ and we have already estimated the main values of $A$ and $B$. Assuming $\delta > A$ we obtain 
\begin{align*}
\delta & \leq  2A (\log(6B) - \log(A)  ) \\
& \leq  2 A \left (\log  (6B \delta) - \log(\delta) - \log(A) \right) \\
& \leq  2 A \left ( \log  (6B \delta)  - 2 \log(A)  \right) \\
& \leq 22 C_A \mu_3 
\log \left( \frac{3 p}{C_A \mu_3} \right) 
\log \left(  \frac{ p \sqrt{2 \E \varphi^2(Z)}  }{ 10 C_A \mu^2_3} \right)
\end{align*}

\end{proof}

\begin{remark}
In i.i.d case with $\Sigma = I_p$ and $\phi(x) = O ( \| x \| )$ 
\[
\left | \P (\varphi(X) > x) -  \P (\varphi(Z) > x) \right |  = O \left(  \frac{ p \log^2(n)} {\sqrt{n}} \right) 
\]
Note that  Lemma \ref{BES}  improves  the classical Multivariate Berry--Esseen Theorem \cite{Bentkus} for the case of norm functions $\phi(x) = O ( \| x \| )$. Namely, it answers the open question ``Whether one can remove or replace the factor $p^{1/4}$ by a better one (eventually by 1)''.    
\end{remark}

\begin{center}
***
\end{center}

The author thanks Prof. Roman Karasev, Prof. Vladimir Spokoiny and Prof. Dmitriy Dylov for discussion and contribution to this paper.

\bibliographystyle{plain}
\bibliography{references}

\end{document}